  \def\th@plain{
  \thm@headfont{\bfseries} 
  \thm@notefont{\itshape} 
  \itshape
}
  \def\th@definition{
  \thm@headfont{\bfseries} 
  \thm@notefont{\bfseries} 
}
  \def\th@remark{
  \thm@headfont{\bfseries} 
  \thm@notefont{\bfseries} 
	}
\newtheorem{theorem}{Theorem}[section]
\newtheorem{lemma}[theorem]{Lemma}
\newtheorem{proposition}[theorem]{Proposition}
\newtheorem{corollary}[theorem]{Corollary}
\newtheorem{conjecture}[theorem]{Conjecture}
\theoremstyle{question}
\newtheorem{question}[theorem]{Question}
\theoremstyle{definition}
\newtheorem{definition}[theorem]{Definition}
\theoremstyle{remark}
\newtheorem{remark}[theorem]{Remark}
\newtheoremstyle{cited}{.5\baselineskip\@plus.2\baselineskip\@minus.2\baselineskip}{.5\baselineskip\@plus.2\baselineskip\@minus.2\baselineskip}{\itshape}{}{\bfseries}{\bfseries .}{5pt plus 1pt minus 1pt}{\thmname{#1}\thmnumber{~#2}\thmnote{ \normalfont#3}}
\theoremstyle{cited}
\newtheorem{citedthm}[theorem]{Theorem}
\newtheoremstyle{citeddef}{.5\baselineskip\@plus.2\baselineskip\@minus.2\baselineskip}{.5\baselineskip\@plus.2\baselineskip\@minus.2\baselineskip}{}{}{\bfseries}{\bfseries .}{5pt plus 1pt minus 1pt}{\thmname{#1}\thmnumber{~#2}\thmnote{ \normalfont#3}}
\theoremstyle{citeddef}
\newcommand{\C}{\mathbb{C}}
\newcommand{\Z}{\mathbb{Z}}
\newcommand{\Hom}{\mathrm{Hom}}
\newcommand{\K}{\mathcal{L}}
\def\be{\begin{equation}}
\def\ee{\end{equation}}
\def\bt{\begin{theorem}}
\def\et{\end{theorem}}
\def\bc{\begin{corollary}}
\def\ec{\end{corollary}}
\def\br{\begin{remark}}
\def\er{\end{remark}}
\def\bp{\begin{proposition}}
\def\ep{\end{proposition}}
\def\bl{\begin{lemma}}
\def\el{\end{lemma}}
\def\bex{\begin{ex}}
\def\eex{\end{ex}}
\def\bd{\begin{definition}}
\def\ed{\end{definition}}
\DeclareMathOperator{\codim}{codim}              
\DeclareMathOperator{\reg}{reg}                  
\DeclareMathOperator{\id}{id}                    
\DeclareMathOperator{\dol}{dol}
\DeclareMathOperator{\Char}{Char}
\DeclareMathOperator{\CC}{CC}
\DeclareMathOperator{\im}{Im}
\def\Pic{{\rm Pic}}
\def\bZ{\mathbb{Z}}
\newcommand{\sF}{\mathcal{F}}
\newcommand{\sH}{\mathcal{H}}
\newcommand{\sK}{\mathcal{K}}
\newcommand{\sL}{\mathcal{L}}
\newcommand{\sO}{\mathcal{O}}
\newcommand{\sP}{\mathcal{P}}
\newcommand{\sR}{\mathcal{R}}
\newcommand{\sT}{\mathcal{T}}
\newcommand\sV{{\mathcal V}}
\newcommand{\bbC}{\mathbb{C}}
\newcommand{\bbK}{\mathbb{K}}
\newcommand{\bbL}{\mathbb{L}}
\newcommand{\bbP}{\mathbb{P}}
\newcommand{\bbR}{\mathbb{R}}
\newcommand{\bbZ}{\mathbb{Z}}
\newcommand{\scrC}{\mathscr{C}}
\newcommand{\into}{\hookrightarrow}
\newcommand{\onto}{\rightarrow\hspace*{-.14in}\rightarrow}
\renewcommand{\SS}{\operatorname{SS}}
\DeclareMathOperator{\torsion}{torsion}
\newcommand{\TC}{\mathrm{TC}}
\begin{document}  
\title[Generic Vanishing, 1-forms, and Topology of Albanese Maps]{Generic Vanishing, 1-forms, and Topology of Albanese Maps} 

\author{Yajnaseni Dutta}

\address{Mathematisch Instituut, Universiteit Leiden, Niels Bohrweg 1, 2333CA, Leiden, NL.}
\email{y.dutta@math.leidenuniv.nl}

\author{Feng Hao}

\address{School of Mathematics, Shandong University, 27 Shanda South Road, Jinan 250100 China.}
\email{feng.hao@sdu.edu.cn}

\author{Yongqiang Liu}

\address{The Institute of Geometry and Physics, University of Science and Technology of China, 96 Jinzhai Road, Hefei 230026 China.}
\email{liuyq@ustc.edu.cn}

\subjclass[2020]{primary 32Q55, 32S60, 14K12,  14C30 secondary  14F40} 
\keywords{Holomorphic one-form, cohomology jump loci, constructible sheaves, singular support, linearity, Abelian variety}

\begin{abstract}

 Given a bounded constructible complex of sheaves $\sF$ on a complex Abelian variety, we prove an equality relating the cohomology jump loci of $\sF$ and its singular support. 
As an application, we identify two subsets of the set of holomorphic 1-forms with zeros on a complex smooth projective irregular variety $X$; one from Green-Lazarsfeld's cohomology jump loci and one from the Kashiwara's estimates for singular supports. This result is related to Kotschick's conjecture about the equivalence between the existence of nowhere vanishing global holomorphic 1-forms and the existence of a fibre bundle structure over the circle. Our results give a conjecturally equivalent formulation using singular support, which is equivalent to a criterion involving cohomology jump loci proposed by Schreieder. As another application, we reprove a recent result proved by Schreieder and Yang; namely if $X$ has simple Albanese variety and admits a fibre bundle structure over the circle, then the Albanese morphism cohomologically behaves like a smooth morphism with respect to integer coefficients.

In a related direction, we address the question whether the set of 1-forms that vanish somewhere is a finite union of linear subspaces of $H^0(X,\Omega_X^1)$. We show that this is indeed the case for forms admitting zero locus of codimension 1. 
\end{abstract}

\maketitle
\section{Introduction}\label{intro}

We study the topology of the Albanese map via constructible complexes on Abelian varieties. The latter has been recently explored extensively by Schnell \cite{Sch15} using generalised Fourier--Mukai tranforms and the language of holonomic D-modules. His results vastly extended the foundational results in \cite{GL87,GL91, sim93b, Ara97} about cohomology jump loci of rank one local systems, as well as their incarnations in the moduli of line bundles with connections or rank 1 higgs bundles on smooth irregular projective varieties. 
In particular, Schnell proved a structure theorem for cohomology jump loci for any bounded constructible complex of sheaves on complex Abelian varieties. 
To state Schnell's results, we first recall the definition of cohomology jump loci. 

Let $D^b_c(A)$ denote the category of bounded constructible complex of $\C$-sheaves on a complex Abelian variety $A$. Set  $\Char^0(A)\coloneqq \Hom(H_1(A, \bbZ), \bbC^{\ast})$, which is the moduli space of rank one $\C$-local systems on $A$.
\begin{definition} 
The $i$-th\textsl{ cohomology jump loci} of $\sF\in D^b_c(A)$  is defined to be the set
\[\sV^i(A, \sF) \coloneqq \{\rho\in\Char^0(A)| H^i(A, \sF\otimes \bbC_{\rho}) \neq 0\},\]
where $\bbC_{\rho}$ denotes the rank 1 local system associated to $\rho\in \Char^0(A)$.
Furthermore, we write $\displaystyle\sV(A,\sF)\coloneqq\bigcup_i \sV^i(A,\sF)$.
\label{def:cjl}
\end{definition}
Schnell's structure theorem states that $ \sV^i(A,\sF)$ is finite union of translated subtori  of $\Char^0(A)$ \cite{Sch15}. 
 In this paper, we prove an equality relating the cohomology jump loci $\sV(A,\sF)$ and the singular support of $\sF$ on complex Abelian varieties. Moreover, we show that the projection of  the singular support of $\sF$ is linear, which is compatible with Schnell's structure theorem on cohomology jump loci, see Remark \ref{rm schnell}.

\bigskip

 The $(1,0)$-piece of the tangent space $\TC_{\rho}(\Char^0(A))$ at a character $\rho \in \Char^0(A)$ is $H^0(A,\Omega_A^1)$. 
Let $\sT(A, \sF)$ denote the union of the $(1,0)$-part of the tangent space to the irreducible components of the subvariety $\sV(A, \sF)$.
\bt
Let $A$ be a complex Abelian variety. For any $\sF \in D^b_c(A)$,  we have the equality
\be \label{equality}
\pi(\SS(\sF)) =  \sT(A, \sF),
\ee
where
$\SS(\cdot)$ denotes the singular support (see Definition \ref{singular support for complex}) of constructible complexes in the cotangent space $T^*A\simeq A\times H^0(A, \Omega_A^1)$  and $\pi\colon T^*A\to H^0(A, \Omega_A^1)$ is the natural projection.
In particular, $\pi(\SS(\sF))$ is a finite union of linear subspaces of the vector space $H^0(A, \Omega_A^1)$.
\label{thm:perverse}
\et
\br \label{rm schnell} The linearity part of the theorem could also follow from the structure theorem for $\sV(A,\sF)$ proved by Schnell \cite[Theorem 7.3]{Sch15}, once the equality (\ref{equality}) is provided.
 Here we give a direct proof of the linearity property of $\pi(\SS(\sF))$ without using Schnell's results (see \cref{van-nonsimple}). 
\er
\br
Due to Riemann-Hilbert correspondence,  \cref{thm:perverse} also holds for regular holonomic $D$-module complexes. 
\er 

The key technique we
use is to relate the two sides of the equality in \cref{thm:perverse} via the
Euler characteristic formula given by 
the Kashiwara index theorem (see \cref{thm:index}). In fact, \cref{thm:perverse} should be viewed as a modified version of Kashiwara index theorem on complex Abelian varieties, since $\SS(\sF)$ records a piece of information about characteristic cycles of $\sF$ and $\sV(A,\sF)$ records a piece of information about the Euler characteristic number of $\sF$.

\medskip
As an application of Theorem \ref{thm:perverse},  
we have the following result proved recently by Schreieder and Yang \cite[Corollary 3.4]{SY22}.
 This prompted us to prove Theorem \ref{thm:perverse} in arbitrary characteristics for simple abelian varieties (see Proposition \ref{any field}) and obtain their result as a corollary. We thank the referee for encouraging us to generalize our main theorem in this direction. 
\begin{corollary} \label{cor general type} Let $f\colon X \to A$ be a morphism from a smooth complex projective variety to a simple Abelian variety.  If there exists a $\scrC^{\infty}$-fibre bundle structure $p_X\colon X\to S^1$ such that $p_X^* (d\theta) \in f^*H^1(A, \bbR)$, where $\theta$ is a
coordinate of the circle,
 then $f$ is a $\Z$-homology fiber bundle. Moreover, for any algebraically closed field coefficient $\bbK$, $\pi(\SS(\bbR f_*\bbK_X))=\{0\}$.
\end{corollary}

\begin{remark}
When the assumption on the simplicity of $A$ is dropped in the Corollary above, our theorem more generally gives information on the topological structure of $\SS(\bbR f_*\bbC_X)$. 
See \cref{corgentype-nonsimple} for more details.
\end{remark}

As another application of Theorem \ref{thm:perverse}, we have the following

\bt Let $X$ be a smooth projective variety with $a\colon X\to A_X$ its Albanese morphism. Under the linear isomorphism $H^0(X,\Omega^1_X)\cong H^0(A_X,\Omega^1_{A_X}) $ one can identify
\be \label{etale equality}
\bigcup_\tau \pi(\SS(\bbR (a\circ \tau)_*\bbC_{X'}))=  \bigcup_\tau \{\omega \in H^0(X,\Omega^1_X)| 
(H^{\bullet}(X', \bbC), \wedge\tau^*\omega) \text{ is not exact}\}, 
\ee
where both unions are running over 
all possible finite \'etale cover $\tau\colon  X' \to X$. 
\label{thm:etale linearity}
\et

The study of the above theorem is motivated by the following conjecture, which was posed by Kotschick \cite[Question 15]{Kot21} and Schreieder \cite{Sch19}.

\begin{conjecture} \label{conjecture}
Let $X$ be a smooth complex projective variety. Then the following three statements are equivalent: 
\begin{enumerate}
	\item\label{kot1} $X$ admits a global holomorphic 1-form
	without zeros.  
	\item\label{kot2} $X$ admits a $\scrC^{\infty}$ real closed 1-form which has no zeros,
	or equivalently $X$ admits a $\scrC^{\infty}$-fibre bundle structure over the circle \cite[Theorem 1]{Tis70}.
	\item\label{kot3} There exists $\omega \in H^0(X,\Omega_X^1)$ such that for all finite \'etale morphism $\tau\colon  X' \to X$, the complex
 $(H^{\bullet}(X', \bbC), \wedge\tau^*\omega)$ is exact. 
\end{enumerate}
\end{conjecture}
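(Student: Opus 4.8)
The plan is to establish the cycle $(1)\Rightarrow(3)$, $(1)\Rightarrow(2)$, $(3)\Rightarrow(1)$ and $(2)\Rightarrow(1)$, organising every step around the Albanese morphism $\alb\colon X\to A:=\Alb(X)$. Since $H^0(X,\Omega_X^1)\cong H^0(A,\Omega_A^1)$ is the cotangent space of $A$ at the origin, every $\omega$ is the pullback of a translation-invariant form on $A$, so both its zero locus and the cohomology appearing in $(3)$ are governed by the topology of $\alb$. The engine is the equality advertised in the abstract: writing $\Sigma\subseteq H^0(X,\Omega_X^1)$ for the set of forms whose twisted complex in $(3)$ fails to be exact on some finite \'etale cover, and $W$ for the locus defined through the topology of $\alb$, one has $\Sigma=W$. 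Granting this, I would reduce the whole conjecture to identifying $W$ with $Z:=\{\omega:\omega \text{ vanishes somewhere on } X\}$ and to a Tischler-type bridge realising the complement $W^{c}$ by nowhere-zero closed $1$-forms.

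For the forward implications: to prove $(1)\Rightarrow(3)$, note that if $\omega$ has no zeros then neither does $\tau^*\omega$ on any finite \'etale $\tau\colon X'\to X$, so the Koszul complex of sheaves $(\Omega_{X'}^\bullet,\wedge\,\tau^*\omega)$ is exact; combined with the degeneration of the associated hypercohomology spectral sequence on the compact K\"ahler manifold $X'$ this promotes sheaf-exactness to exactness of $(H^\bullet(X',\bbC),\wedge\,\tau^*\omega)$, whence $\omega\notin\Sigma$. Contrapositively this is the inclusion $\Sigma\subseteq Z$. To prove $(1)\Rightarrow(2)$, I would write $\omega=\alpha+i\beta$ with $\alpha,\beta$ real closed $1$-forms; the zero set of $\cos\theta\,\alpha+\sin\theta\,\beta$ lies in the degeneracy locus $\{\alpha\wedge\beta=0\}$, and after reducing to the invariant forms on $A$ a genericity argument produces an angle $\theta$ for which this real closed form is nowhere zero, whereupon Tischler's theorem furnishes the $\scrC^\infty$ fibre bundle over the circle (the degenerate case in which $\alpha,\beta$ are everywhere proportional forces a fibration onto an elliptic curve and is handled directly).

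The converses carry all the difficulty. Using $\Sigma=W$, statement $(3)$ reads $W\neq H^0(X,\Omega_X^1)$; I would extract the nowhere-zero form of $(1)$ from the inclusion $Z\subseteq W$, namely that a genuine zero of $\omega$ is always visible as a failure of exactness on some finite \'etale cover. Together with $\Sigma\subseteq Z$—the contrapositive of $(1)\Rightarrow(3)$, already in hand—this yields $W=Z$, hence $(1)\Leftrightarrow(3)$. For $(2)$ I would use Tischler's theorem and Novikov theory to pass from a circle-fibration structure to the Novikov-triviality of the corresponding class in $H^1(X,\bbR)$; since $W$ is defined purely through the topology of $\alb$, such a class lands in $W^{c}$, and $W=Z$ then produces the nowhere-zero holomorphic form, so $(2)\Rightarrow(1)$ also reduces to the single identification $W=Z$.

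The main obstacle is precisely the inclusion $Z\subseteq W$, that is, detecting every zero of $\omega$ cohomologically after passing to a suitable cover. My plan is to attack it through the structure of cohomology jump loci: by the theorems of Simpson and Budur--Wang the characteristic varieties of $X$ are finite unions of torsion-translated subtori, and the tangent cone theorem identifies their germ at the trivial character with the resonance varieties computed by $\wedge\omega$; crucially, passing to finite \'etale covers makes the positive-dimensional and translated components accessible through the resonance of the cover, so that a form in $Z$ can be matched with a genuine jump. Concretely I would run a generic vanishing argument for $\alb_*\bbC_X$, transporting the support of the failure of exactness from a zero of $\omega$ onto a component of $W$ cut out by a subtorus of $A$. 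The sharp point is to rule out translated components that could shelter a zero without producing a jump; isolating this as the principal lemma—with the cases where $\Alb(X)$ is simple, or where the zero locus has codimension one, as the decisive test cases that confirm the mechanism—is the crux on which the general statement turns.
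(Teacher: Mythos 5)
You have set out to prove something the paper itself does not prove and explicitly labels a conjecture: the paper's own contribution is the equivalence of (3) with a fourth criterion (Theorem A), plus partial results, and it leaves the conjecture open in dimension $\geq 4$. Your treatment of the known implications is essentially fine, if occasionally heavier than necessary ((1) $\Rightarrow$ (2) needs no rotation trick: if $\omega$ is a nowhere-vanishing holomorphic 1-form, then $\mathrm{Re}\,\omega$ is already nowhere zero, since $\omega$ being of type $(1,0)$ forces $\omega(x)=0$ wherever $\mathrm{Re}\,\omega(x)=0$; and (1) $\Rightarrow$ (3) is Green--Lazarsfeld). The genuine gap is exactly where you locate it, but it is worse than a ``principal lemma'': your inclusion $Z\subseteq W$ --- that every 1-form with a zero fails exactness of $(H^{\bullet}(X',\bbC),\wedge\tau^*\omega)$ on some finite \'etale cover --- is equivalent to the open conjecture itself, and the paper poses precisely this as an open question, namely whether $\overline{\bigcup_\tau \pi(\SS(\bbR(a\circ\tau)_*\bbC_{X'}))}=W(X)$, known only in dimension $\leq 3$ by Schreieder and Hao--Schreieder. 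You have also misread the abstract: the equality ``advertised'' there (Theorem A) identifies $\Sigma$ with $\bigcup_\tau \pi(\SS(\bbR(a\circ\tau)_*\bbC_{X'}))$, \emph{not} with $W(X)$; the relation to $W(X)$ is only the one-way inclusion coming from Kashiwara's estimate, which runs in the direction opposite to the one you need (zeros bound singular supports from above, they do not produce jumps).

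Concretely, the mechanism you propose for $Z\subseteq W$ --- Simpson/Budur--Wang structure theory plus the tangent-cone theorem after passing to covers --- yields linearity and accessibility of translated components of the jump loci, but supplies no device for manufacturing a nontrivial cohomology jump out of a zero of $\omega$. The paper's own results in Section 4 show the mechanism genuinely fails in the naive form: Theorem~\ref{Thm:Proj-codim1} decomposes $W^1(X)=\sT_\Sigma^1(X)\cup W_{\textnormal{neg}}(X)$, and Remark~\ref{rem:TX}(3) (iterated blow-ups of an abelian surface) exhibits forms vanishing in codimension one along $H$-negative divisors that the degree-one jump loci, even after all covers, do not see; their membership in $W^1$ is detected instead by Spurr-type negativity, a different ingredient entirely. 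So the ``decisive test case'' you defer is one where your proposed engine is known to be insufficient as stated, and the reduction of (2) $\Rightarrow$ (1) through Novikov theory to $W=Z$ inherits the same unproved core. In short: nothing before the final paragraph is wrong, but the proposal proves only what was already known ((1) $\Rightarrow$ (2), (1) $\Rightarrow$ (3)) and repackages the open problem as its crux.
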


Theorem \ref{thm:etale linearity} then gives the fourth criterion, which is equivalent to (\ref{kot3}) in Conjecture \ref{conjecture}:
\begin{enumerate}
\setcounter{enumi}{3} 
	\item\label{kot4}There exists $\omega\in H^0(X,\Omega^1_X) \setminus \bigcup_\tau \pi(\SS(\bbR (a\circ \tau)_*\bbC_{X'}))$.
\end{enumerate}

\begin{remark} \label{resonance-1-form with zero}
All the 1-forms involved in Theorem \ref{thm:etale linearity} are contained in $W(X)$, the collection of holomorphic one forms on $X$ with zeros. 
By a result of Green and Lazarsfeld \cite{GL91} we know that the set on the right side of (\ref{etale equality}) is contained in $W(X)$.
On the other hand, it follows from Kashiwara's estimate (see \cref{thm:kas-ss}) that the left side is also contained in $W(X)$.
Hence  we pose the following question: Is it true that
\be
\overline{ \bigcup_\tau \pi(\SS(\bbR (a\circ \tau)_*\bbC_{X'}))} = \overline{ \bigcup_\bbL \pi(\SS(\bbR a _*\bbL))} = W(X)?
\ee 
Here $\overline{\cdot}$ denotes the Zariski closure and the second union is running over all possible local systems $\bbL$ on $X$.
 
The answer is yes for varieties of dimension less than or equal to 3. In this situation, in fact, using Theorem \ref{thm:etale linearity}, \cite[Corollary 3.1]{Sch19} and \cite[Theorem 1.4]{HS19} we have  $$ \bigcup_\nu \pi(\SS(\bbR (a \circ \nu)_* \C_{X'}))=W(X),$$
with the union running over all possible finite Abelian \'etale covers $\nu\colon X'\to X$.
\er

\subsection{linearity of 1-forms admitting zeros}
By Theorem \ref{thm:perverse}, $\pi(\SS(\bbR (a\circ \tau)_*\bbC_{X'})) $
is a  linear subspace of the vector space $ H^0(X, \Omega_{X}^1)$.  From the point of view of Theorem \ref{thm:etale linearity} and Remark \ref{resonance-1-form with zero}, one may wonder whether the set $W(X)$
is also a finite union of  vector subspaces of $H^0(X,\Omega_X^1)$.  Such a statement is true for the set of global holomorphic tangent vector fields with zeros due to the work of Carrell and Lieberman \cite{CL73}.

 More specifically,  consider 
\[W^i(X) = \{\omega \in H^0(X,\Omega_X^1) \mid  \codim_X Z(\omega) \leq  i\},\]
where $Z(\omega)$ is the zero set of $\omega$. 
 Green and Lazarsfeld showed \cite{GL87} that $W^i(X)$ contains the $(1,0)$-piece of the tangent cone of the cohomology jump loci of $X$ up to degree $i$. Note that the cohomology jump loci of $X$ are finite union of torsion translated sub-tori.
We ask the following
\begin{question}
Are $W^i(X)$ linear for every degree $i$, i.e.\ a finite
union of vector subspaces of the vector space $H^0(X, \Omega_X^1)$?
\label{qn:linearity}
\end{question}

\noindent We answer the question positively for $W^1(X)$. 
\begin{citedthm}[(see \cref{Thm:Proj-codim1})]
 Let $X$ be a smooth projective variety of dimension $n$. Then $W^1(X)$ is linear.
\label{thm:codim1}
\end{citedthm}
 This follows from
a result of Spurr \cite[Theorem 2]{Spu88}; whenever a 1-form $\omega$ vanish along a divisor $E$, one has either $E$ is rigid in the sense that $E^2\cdot H^{n-2}< 0$ with respect to some polarisation $H$ on $X$, or $\omega$ comes from a curve. We generalise this statement
in the setting of a pair (see \cref{thm:spurr}) and prove the linearity statement for logarithmic 1-forms admitting codimension one zeros.

\begin{citedthm}[(see \cref{thm:codim1quasiprojective})]
Let $(X, D)$ be a pair with $X$ a smooth projective variety and $D$ a simple normal crossing divisor of $X$. Then the following set is linear $$ W^1(X,D)\coloneqq\{ \omega \in  H^0(X, \Omega_X^1(\log D)) \mid \codim_X Z(\omega) \leq 1 \}.$$
\end{citedthm}

\subsection*{Convention}  In this paper, all complex of sheaves and perverse sheaves are defined with complex
coefficients except in \S \ref{sec:perverse} and \ref{sec:SY22}. All the varieties are complex quasi-projective varieties. Unless specified otherwise by 1-forms on a smooth projective variety, we mean global holomorphic 1-forms.

\section{Preliminaries}
\subsection{1-forms and associated local systems}\label{sec:gv}
The results  in this subsection should be well known to the experts and we include it here due to the lack of references.

Let X be a smooth projective variety. Consider the identity component of the character variety $\Char(X)\coloneqq \Hom(H_1(X, \bbZ), \bbC^{\ast})$  given by
\[\Char^0(X)\coloneqq \Hom(H_1(X, \bbZ)/\torsion, \bbC^{\ast}).\] 
The $i$-th\textsl{ cohomology jump loci} $\sV^i(X,\sF)\subset \Char^0(X)$ for
$\sF\in D^b_c(X)$  is defined in a similar way as in \cref{def:cjl}. As in the introduction $ \sV(X,\sF)= \bigcup_i \sV^i(X,\sF)$.   The corresponding tangent cone $\sT(X, \sF)\subset H^0(X,\Omega_X^1)$ is also defined similarly as was done before Theorem \ref{thm:perverse}. 
More precisely,
\[\sT(X, \sF)\coloneqq H^0(X, \Omega_{X}^1)\cap \big(  \bigcup_{\rho}   \TC_{\rho} \sV(X, \sF)\big),\] 
where the union is running over representative points from irreducible components of $\sV(X,\sF)$
and
$\TC_{\rho} \sV(X,\sF) \subseteq H^1(X, \bbC)$ denotes the tangent cone at $\rho$. 
When $\sF=\bbC_X$, we simply write $\sT(X)\coloneqq\sT(X, \C_X)$.

\medskip

Given a 1-form $\omega\in H^0(X,\Omega_X^1)$,
the kernel $L(\omega)$ of $\sO_X\overset{d+\wedge\omega}{\longrightarrow} \Omega_X^1$ is a rank 1 local system resolved by the de Rham complex (see the proof of \cite[Lemma 2.2]{Sch19})
\begin{equation}
	\sK^{\bullet}(\omega) \coloneqq [\sO_X\overset{d+\wedge\omega}{\longrightarrow} \Omega_X^1 \longrightarrow \cdots\overset{d+\wedge\omega}{\longrightarrow}  \Omega_X^{n-1}\overset{d+\wedge\omega}{\longrightarrow}  \Omega^n_X],
	\label{eq:koszul}
\end{equation}
and hence $H^k(X, L(\omega))=\mathbb{H}^k(X,(\Omega_X^{\bullet}, d+\wedge \omega))$. What's more, the corresponding line bundle $\sL_{\omega}\coloneqq L(\omega)\otimes_{\C}\sO_X$ is isomorphic to $\sO_X$.  
Hence we have the following short exact sequence 

\begin{equation}
	0\to H^0(X, \Omega_X^1)\to \Char^0(X)\to \Pic^0(X)\to 0,
	\label{eq:lsseq}
\end{equation} 
$$\omega\mapsto L(\omega); L \mapsto L\otimes_{\C}\sO_X,$$

In order to obtain a Kodaira--Nakano-type vanishing theorem
for degree zero line bundles Green--Lazarsfeld \cite{GL87} considered
the following spectral sequence associated to the complex (\ref{eq:koszul}) $$E_1^{p, q}(\omega)\coloneqq H^p(X, \Omega_X^q)\Rightarrow \mathbb{H}^{p+q}(X,(\Omega_X^{\bullet}, d+\wedge \omega)),$$ with differential $d_1=\wedge\omega: E_1^{p, q}(\omega)\to E_1^{p, q+1}(\omega)$ induced by $d+\wedge\omega$ in complex (\ref{eq:koszul}). Using this spectral sequence, \cite[Proposition 3.4]{GL87} shows that if there is a holomorphic 1-form $\omega$ whose zero locus $Z(\omega)$ has codimension $\geq k$, then the sequence
\[\cdots \longrightarrow H^p(X, \Omega^{q-1})
\overset{\wedge\omega}{\longrightarrow}H^p(X, \Omega_X^{q})
\overset{\wedge\omega}{\longrightarrow} H^p(X,\Omega_X^{q+1})\longrightarrow\cdots\]
is exact for all $p+q<k$. Putting these together 
by the Hodge decomposition for $H^i(X,\bbC)$ we get
\begin{equation}
	(H^{\bullet}(X,\bbC), \wedge\omega)\coloneqq [\ldots\to H^{i-1}(X,\C)\overset{\wedge\omega}{\longrightarrow}H^{i}(X,\C)\overset{\wedge\omega}{\longrightarrow}H^{i+1}(X,\C)\to\ldots]
	\label{eq:resonance}
\end{equation}
is exact for all $i<k$. This prompts the following

\begin{definition}[Holomorphic resonant varieties]\label{def:resonance}
	The $k$-th {\sl holomorphic resonant variety} of $X$ is defined as
	$$ \sR^k(X)\coloneqq\{ \omega \in H^0(X,\Omega_X^1) \mid  H^k(H^{\bullet}(X, \bbC), \wedge\omega) \neq 0\} $$
	and we set $\sR(X)=\bigcup_k \sR^k(X)$. 
	We will refer to the sequence $(H^{\bullet}(X,\bbC), \wedge\omega)$ above as the \textsl{resonance sequence}.
\end{definition}

More generally, both $\sK^{\bullet}(\omega)$ and
$\sR^k(X)$ can be twisted by unitary local systems. Recall that the space of unitary local systems is defined to be
\[\Char^0(X)^u \coloneqq \Hom(H_1(X,\bbZ)/\torsion , U(1)).\]
For any unitary character $\eta\in \Char^0(X)^u$, the corresponding local system $\mathbb{C}_{\eta}$ corresponds to a degree 0 line bundle $\sL_{\eta}\coloneqq \mathbb{C}_{\eta}\otimes_{\C}\sO_X$. In fact, this gives a one-to-one correspondence between $\Char^0(X)^u$ and $\Pic^0(X)$. 

\begin{definition}[Generalised holomorphic resonant variety]
	Given a local system $\bbC_{\eta}$ associated to a unitary character $\eta$, the $k$-th {\sl generalised holomorphic resonant variety} associated to $\eta$ is defined as  
		\[\sR^k(X, \bbC_{\eta}) \coloneqq
		\{\omega\in H^0(X, \Omega_X^1)| H^k(H^{\bullet}(X, \bbC_{\eta}), \wedge\omega) \neq 0 \}\footnote{Here we only write $\wedge\omega$ in $(H^{\bullet}(X, \bbC_{\eta}), \wedge\omega)$, since $d$ acts trivially on $H^{\bullet}(X, \bbC_{\eta})$.}.\]  
		Also, we set \[\sR_{\dol}(X) \coloneqq \bigcup_{k, \eta\  \text{unitary}}\sR^k(X,\bbC_{\eta}).\]
	
\end{definition}

As noted in the introduction, another way to understand $\sR_{\dol}(X)$ is via the tangent cone of the cohomology jump loci $\sV(X)$.  We have the following lemma due to  \cite[Proposition 3.7, Remark on p.\ 404]{GL87}, which directly generalises the so-called \textsl{tangent-cone equality}
	\be\label{eq:tangentconeeq}
	H^0(X,\Omega^1_X)\cap \TC_1\sV(X,\C_X) =\sR(X).
	\ee
	
	\begin{lemma} \label{prop:equivalence}
		With the notation as above, we have
		\[H^0(X,\Omega_X^1)\cap\TC_{\eta}(\sV^k(X,\bbC_X))=\sR^k(X, \bbC_\eta).\]
	\end{lemma}

	\br A more precise version of the above lemma  for anti-holomorphic 1-forms can be found in \cite[Theorem 1.3]{BW15} due to Budur-Wang.
\er 
The following corollary follows directly from Lemma \ref{prop:equivalence}.
\bc
Let $X$ be a smooth projective variety. Then we have
$\sT(X) = \sR_{\dol}(X).$		
\ec

\subsection{Constructible Complexes of Sheaves and Perverse Sheaves}\label{sec:perverse}

Let $D^b_c(X)$ denote the derived category of bounded constructible complex of sheaves on $X$ with coefficients in a field $\bbK$. {\sl Perverse sheaves} on  $X$ are, roughly speaking, a generalisation of local systems, i.e.\ locally constant sheaves. 
We refer the readers to \cite[Chapter 8]{HTT08} and \cite[section 5]{Dim04} for definitions and a 
comprehensive background on this topic.

Given $\sF\in D^b_c(X)$, its \textsl{characteristic cycle} $\CC(\sF)$ is a finite $\bZ$-linear combination of irreducible conic Lagrangian cycles $T^*_Z X\colon=\overline{T^*_{Z_i^{\reg}}X}$ in $T^*X$ over certain irreducible closed subvarieties $Z_i \subseteq X$
\[\CC(\sF)= \sum_i n_{Z_i} T^*_{Z_i}X.\]
Here $T^*_{Z_i^{\reg}}X$ is the conormal bundle of the regular locus  $Z_i^{\reg}$ of $Z_i$ in $X$.
For the definition when $\sF\in D^b_c(X,\bbK)$, for a field $\bbK$ in any characteristic, see e.g.\ \cite[Definition 3.34]{MS22}.

The Euler characteristic of $\sF$ satisfy  the following Kashiwara's index theorem, see \cite{Kas85} for complex coefficients and  also \cite[Theorem 3.38, Example 3.39, 3.40]{MS22} for any field coefficients.

\begin{citedthm}
	\label{thm:index} 
	For $\sF\in D^b_c(X)$  on a smooth projective variety $X$, we have
	$$\chi(X,\sF)= \CC(\sF) \cdot T^*_X X = \sum_i n_{Z_i}(T^*_{Z_i}X \cdot T^*_XX)$$
	where the dot denotes intersections of cycles in the complex manifold $T^*X$.  
\end{citedthm}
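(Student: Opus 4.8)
The plan is to pass from the perverse sheaf to its underlying regular holonomic $\sD_X$-module via the Riemann--Hilbert correspondence and to compute the Euler characteristic through a good filtration. Since $\sP$ lies in the heart of the perverse $t$-structure, it corresponds to a single regular holonomic module $\sM$ with $\sP \simeq \DR(\sM)$, so $\chi(X,\sP) = \chi(X,\DR(\sM))$. Both sides of the asserted identity are additive in short exact sequences: the Euler characteristic is additive, the characteristic cycle $\CC$ factors through the Grothendieck group of holonomic modules, and intersection against the fixed cycle $[T^*_X X]$ is linear. Hence it suffices to establish the formula for a single $\sM$, for which I would fix a good filtration $F_\bullet\sM$; recall that $\CC(\sM)=\sum_i n_{Z_i}[T^*_{Z_i}X]$ is the cycle of $gr^F\sM$ and is independent of this choice.

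First I would filter the de Rham complex $\DR(\sM)=[\sM\to \Omega_X^1\otimes\sM\to\cdots\to\Omega_X^n\otimes\sM]$ by the rule $F_k(\Omega_X^q\otimes\sM)=\Omega_X^q\otimes F_{k+q}\sM$, which is preserved by the connection. The crucial observation is that the associated graded complex is the Koszul complex
\[
gr^F\DR(\sM) \;=\; \big[\,gr^F\sM \to \Omega_X^1\otimes gr^F\sM \to \cdots \to \Omega_X^n\otimes gr^F\sM\,\big],
\]
whose differential is the principal symbol of the de Rham differential, i.e.\ wedging with the tautological one-form on $T^*X$. Viewing $gr^F\sM$ as the coherent $\sO_{T^*X}$-module whose cycle is $\CC(\sM)$, this is exactly the Koszul complex resolving the restriction of $gr^F\sM$ to the zero section. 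Because the Euler characteristic is preserved along the spectral sequence of a filtered complex, I obtain $\chi(X,\DR(\sM))=\chi\big(gr^F\DR(\sM)\big)$, where the target is a coherent Euler characteristic on $T^*X$ (the projection $\pi\colon T^*X\to X$ being affine); finiteness of all the terms follows from properness of $X$ together with the fact that $gr^F\sM$ is supported on the Lagrangian $\SS(\sM)$.

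It then remains to identify the coherent Euler characteristic of the Koszul complex with the intersection number $\CC(\sM)\cdot[T^*_X X]$. The zero section $T^*_X X\hookrightarrow T^*X$ is a regular embedding of codimension $n$, and the Koszul complex above is precisely the derived restriction $gr^F\sM \otimes^{\mathbf{L}}_{\sO_{T^*X}}\sO_{T^*_X X}$. Consequently
\[
\chi\big(gr^F\DR(\sM)\big)=\deg\Big(\sum_i (-1)^i\big[\operatorname{Tor}_i^{\sO_{T^*X}}(gr^F\sM,\sO_{T^*_X X})\big]\Big),
\]
and by Serre's Tor-formula the right-hand side is the refined intersection product of the cycle $[gr^F\sM]=\CC(\sM)$ with $[T^*_X X]$. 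This yields $\chi(X,\sP)=\CC(\sM)\cdot[T^*_X X]=\sum_i n_{Z_i}(T^*_{Z_i}X\cdot T^*_X X)$, as claimed.

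The main obstacle is that this last identification cannot be read off naively as a transverse count of points: for any component $Z_i$ with $\dim Z_i>0$ the set-theoretic intersection $T^*_{Z_i}X\cap T^*_X X$ equals $Z_i$ and is therefore far from proper, so one is genuinely in an excess-intersection situation (most strikingly for the self-intersection $T^*_X X\cdot T^*_X X$, which returns $\pm\chi(X)$ via the Euler class of the cotangent bundle). The virtue of the $\sD_X$-module argument is that the coherent Euler characteristic of the Koszul complex computes the correct refined intersection automatically, precisely because the zero section is a local complete intersection whose Koszul resolution feeds directly into the Tor-formula; verifying this compatibility, and checking that the degenerate loci contribute the expected local multiplicities $n_{Z_i}$, is where the real work lies. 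Alternatively, one could argue topologically, expressing $\chi(X,\sP)$ as a sum of stratified Morse indices for a generic Morse function $f$, identifying each index with the local intersection of $\CC(\sP)$ with the graph of $df$, and then deforming $df$ to the zero section; the deformation-invariance of the intersection number of a conic Lagrangian with a section then plays the role of the filtration argument above.
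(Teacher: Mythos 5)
First, a point of comparison that is somewhat moot: the paper does not prove this statement at all — it is quoted as a known result (Kashiwara's index theorem, cited to \cite{Kas85}) and used as a black box — so your proposal must stand on its own. Your outline does follow the classical $\sD$-module route (Kashiwara's original index theorem for holonomic systems, combined with Riemann--Hilbert): pass to the regular holonomic module $\sM$, choose a good filtration, observe that $gr^F\DR(\sM)$ is the Koszul complex of the tautological section of $\pi^*\Omega_X^1$ tensored with $gr^F\sM$, i.e.\ (up to a twist and shift) the derived restriction of $gr^F\sM$ to the zero section, and then identify the resulting coherent Euler characteristic with $\CC(\sM)\cdot T^*_XX$. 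That reduction is correct and is genuinely how the holonomic index theorem is proved.

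However, as written the argument has a real gap, and you flag it yourself: the final identification is the entire content of the theorem, and the tool you invoke — Serre's Tor formula — does not apply. Serre's formula computes multiplicities of a \emph{proper} (dimensionally transverse) intersection, whereas here $\SS(\sM)\cap T^*_XX=\bigcup_i Z_i$ has excess dimension whenever some $Z_i$ is positive-dimensional, so the alternating sum of Tor's is not a pointwise count of local multiplicities. One needs either localized Chern characters and Fulton-style refined intersection theory together with a Riemann--Roch argument, or a deformation argument for conic Lagrangian cycles (using the $\bbC^*$-action, or replacing the zero section by the graph of a generic section), to show that the degree of $gr^F\sM\otimes^{\mathbf{L}}_{\sO_{T^*X}}\sO_{T^*_XX}$ equals the topological intersection number of the cycle $\CC(\sM)$ with the zero section; your closing paragraph correctly names this as ``where the real work lies,'' which is an admission that the proof is not complete. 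A secondary inaccuracy: ``finiteness of all the terms'' is false as stated, since $gr^F\sM$ is coherent over $\sO_{T^*X}$ but usually not over $\sO_X$, so the individual groups $H^p(X,\Omega_X^q\otimes gr^F\sM)$ are infinite dimensional. What is finite is the hypercohomology of the total graded complex, because its cohomology sheaves are coherent with support in the compact set $\SS(\sM)\cap T^*_XX$; and since the filtration $F_\bullet\DR(\sM)$ is unbounded, equating $\chi(X,\DR(\sM))$ with $\chi\bigl(gr^F\DR(\sM)\bigr)$ also requires a convergence argument for the spectral sequence (only finitely many graded degrees contribute), not just the bare statement that Euler characteristics are preserved page by page.
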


When $\sF$ happens to be a perverse sheaf $\sP$ on  $X$, $n_{Z_i}\geq 0$ (see \cite[Corollary 5.2.24]{Dim04} or \cite[Definition 3.34]{MS22} for any characteristic) and 
its singular support of $\sP $ is defined as
\[\SS(\sP) \coloneqq \bigcup_{n_{Z_i}>0}T^*_{Z_i} X.\]
Then we define the singular support of the bounded complex of constructible sheaves as follows.
\begin{definition} \label{singular support for complex}
	For a constructible complex $\sF\in D^b_c(X) $ and any integer $i$, the singular support of $\sF$ is defined as 
	\[\SS(\sF) \coloneqq \bigcup_{i}\SS(^p\sH^i(\sF)),\]
	where $ ^p\sH^i(\sF)$ is the $i$-th perverse cohomogy of $\sF$.
\end{definition}
Similar definition has also been used in \cite[Exercise X.6]{KS94} and \cite[p.\ 373]{HTT08}.

\medskip

Given $f\colon X\to A$ a morphism from s smooth projective variety $X$ to an Abelian
variety $A$, Kashiwara's estimate for the behaviour of the singular support produces a breeding ground for 1-forms with zeros. This estimate was exploited in
\cite{PS14} to show that all 1-forms admit zeros on smooth projective
varieties of general type. We recall it in our
current framework.
 Up to our knowledge, Kashiwara's estimate are only proved for complex coefficients. 

\begin{citedthm}[(Kashiwara's estimate {\cite[Theorem 4.2]{Kas76}})]
	Given $f\colon X\to A$, consider the following commutative diagram
	\begin{equation}
		\begin{tikzcd}
			T^*X \ar[dr]& f^*T^*A \ar[d]\ar[r, "f\times\id"]\ar[l, "df"'] & T^*A\ar[d]\ar[r, "\pi"]&H^0(A,\Omega_A^1)\\
			&X\ar[r, "f"] &A&
		\end{tikzcd}
		\label{eq:diagram}
	\end{equation}
	Then for any complex local system $\bbL$ on $X$
	\[\SS(\bbR f_* \bbL) \subseteq (f\times \id)(df^{-1}(0_X)),\]
	where $0_X$ denotes the zero section $T^*_XX$ of $T^*X$.
	\label{thm:kas-ss}
\end{citedthm}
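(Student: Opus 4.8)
The plan is to deduce the estimate from the general functorial behaviour of the characteristic variety under a proper direct image, combined with the observation that a local system has the smallest possible singular support. Throughout I work on the $\sD$-module side, which matches the definition of $\SS$ used here, and transport the conclusion back via Riemann--Hilbert at the end.

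\emph{Reduction and the singular support of $\bbL$.} Since $X$ is projective, $f$ is proper, so $\bbR f_* = \bbR f_!$; this properness is exactly the hypothesis under which the microlocal direct image bound is valid. Under Riemann--Hilbert the local system $\bbL$ corresponds to the flat bundle $\sM \coloneqq \sO_X\otimes_{\bbC}\bbL$, which is $\sO_X$-coherent and hence regular holonomic. Because $\bbL$ is locally constant it is non-characteristic in every codirection, so its characteristic variety is precisely the zero section, $\SS(\bbL) = \Char(\sM) = T^*_X X = 0_X$. This is the input that collapses the general estimate to the stated form.

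\emph{The direct image estimate.} I would factor $f$ through its graph as $f = p\circ i_f$, where $i_f\colon X\hookrightarrow X\times A$ is a closed embedding and $p\colon X\times A\to A$ is the projection, proper because $X$ is compact. For the closed embedding, Kashiwara's equivalence computes $\Char(\int_{i_f}\sM)$ directly as the image of $\Char(\sM)$ under the conormal correspondence attached to $i_f$. For the projection one invokes the coherence of the associated graded of a good filtration on $\int_p(-)$, guaranteed by properness, to bound $\Char(\int_p\sN)$ by $p_\pi\, p_d^{-1}(\Char(\sN))$, where $p_d$ and $p_\pi$ are the two maps in the cotangent correspondence of $p$. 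Composing these two correspondences reassembles exactly the maps $df$ and $f\times\id$ appearing in \cref{eq:diagram}, so that applying them to $\Char(\sM)=0_X$ produces $(f\times\id)(df^{-1}(0_X))$. Finally, $\dr(\int_f\sM)\simeq \bbR f_*\bbL$ up to a shift, and $\SS$ is insensitive to shifts, giving $\SS(\bbR f_*\bbL) = \Char(\int_f\sM)\subseteq (f\times\id)(df^{-1}(0_X))$.

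\emph{Main obstacle.} The substantive step is the projection estimate $\Char(\int_p\sN)\subseteq p_\pi\, p_d^{-1}(\Char(\sN))$. Its difficulty is controlling the cotangent directions along the fibres of $p$: without properness, characteristic directions can escape to infinity along the fibres and the bound fails. Making this rigorous amounts to showing that the relative de Rham complex computing $\int_p\sN$ has coherent graded cohomology supported inside the predicted conical Lagrangian, and it is exactly here that compactness of $X$ (equivalently, projectivity) is used. The closed-embedding step and the Riemann--Hilbert compatibility are, by contrast, formal.
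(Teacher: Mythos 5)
The paper does not prove this statement at all: it is a cited theorem, attributed to Kashiwara \cite[Theorem 4.2]{Kas76}, and is used as a black box (e.g.\ in \eqref{eq:kashiwara} and in the proof of \cref{cor general type}). So there is no internal proof to compare against; what you have written is a proposal for the standard proof of the citation itself. Your outline is correct and is essentially the textbook argument (see \cite[Theorem 2.4.6]{HTT08}, or \cite[Proposition 5.4.4]{KS94} on the sheaf side): a local system corresponds under Riemann--Hilbert to an integrable connection, whose characteristic variety is the zero section; one factors $f$ through its graph, handles the closed embedding by Kashiwara's equivalence (where the estimate is in fact an equality), and handles the projection by the coherence of $gr^F$ of a good filtration on the direct image, which is exactly where properness of $f$ (automatic here, $X$ being projective) enters. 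You also correctly note that the composed cotangent correspondences for the graph embedding and the projection reassemble into the maps $df$ and $f\times\id$ of \eqref{eq:diagram}. Two points deserve to be made explicit if this were to be written out in full. First, the projection estimate is asserted rather than proved; you flag it as the main obstacle, which is fair, but it is the entire technical content (via the relative de Rham complex and coherence of its graded cohomology). Second, since the paper defines $\SS$ of a complex through its perverse cohomologies (\cref{singular support for complex}), your identification $\SS(\bbR f_*\bbL)=\Char(\int_f\sM)$ tacitly uses that Riemann--Hilbert intertwines $\sH^i(\int_f\sM)$ with ${}^p\sH^i(\bbR f_*\bbL)$ and that $\Char$ of a complex is the union over its cohomology modules; this compatibility should be invoked explicitly, though it is standard for regular holonomic modules and proper maps.
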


Note that $ \pi(f\times \id)(df^{-1}(0_X)) = W(X)\cap H^0(A, \Omega_A^1)$ under a suitable identification. We use this result frequently as follows:
\be \label{eq:kashiwara}
\pi(\SS(\bbR f_*\bbL)) \subseteq W(X)\cap H^0(A, \Omega_A^1)
\ee

Finally we recall some special properties exhibited by the cohomology jump loci of perverse sheaves on Abelian varieties.

\begin{citedthm}[{\cite[Theorem 4.3, Corollary 1.3, Corollary 4.8]{LMW21}, \cite[Corollary 1.4]{FK00}}]\label{thm:cjlAbelian} \label{thm:lociproperties}
	Let $\sP$ be a perverse sheaf with coefficients in fields of any characteristic on a complex Abelian variety $A$ with $\dim A = g$.
	The
	cohomology jump loci of $\sP$ satisfy the following
	\begin{enumerate}
		\item\label{thm:cjlAbelian1} Propagation property:
		\[\sV^{-g}(A, \sP) \subseteq \cdots\subseteq \sV^{-1}(A, \sP) 
		\subseteq \sV^0(A, \sP) \supseteq \sV^1(A, \sP) \supseteq\cdots\supseteq \sV^g(A, \sP).\]
		Furthermore, $\sV^i(A, \sP) = \emptyset$, if $i \notin [-g, g]$.
		\item \label{thm:cjlAbelian4} Signed Euler characteristic property:
		$\chi(A, \sP) \geq 0$.
		Moreover, the equality holds if and only if $\sV^0(A, \sP) \neq \Char(A)$.
        \item For a general $\rho\in \Char^0(A)$, $H^i(A, \sP\otimes \bbL_{\rho}) = 0$ for all $i\neq 0$, where $\bbL_{\rho}$ is the rank 1 local system associated to $\rho$.
	\end{enumerate}
\end{citedthm} 
The Statement (3) in positive characteristic above is originally due to \cite[Theorem 1.1]{BSS18}.

\section{Constructible complex of sheaves on Abelian varieties}

\subsection{Linearity and comparison}\label{sec:linearity}
In this subsection, we prove that the set of 1-forms supported on the conormal sheaf of a subvariety of an Abelian variety is linear. As a consequence we obtain that the set of 1-forms associated to the singular support of any constructible complex $\sF$ is linear. 
All constructible sheaves and complexes considered in this subsection are over the complex number.

\bp \label{prop:linear}
Let $A$ be an Abelian variety. 
For any $\sF\in D^b_c(A)$,  $\pi(\SS(\sF))$ is linear in $H^0(A, \Omega_A^1)$.
\ep 
This proposition directly follows from the following

\begin{proposition}\label{van-nonsimple}
	Let $A$ be an Abelian variety and $Z$ be a proper irreducible subvariety of $A$. Then the following are equivalent
	\begin{enumerate}
		\item $Z$ is not fibred by tori and $\dim Z>0$. 
		\item General holomorphic 1-form $\omega\in H^0(A, \Omega_A^1)$ restricted to $Z^{\reg}$, i.e.\ $\omega|_{Z^{\textnormal{reg}}}$ admits  isolated zeros on the smooth locus $Z^{\textnormal{reg}}$.
	\end{enumerate} In particular, let $B\subseteq A$ be the largest (in dimensional sense) Abelian subvariety such that $Z$ is fibred by $B$,  we have $\pi(T^*_Z A) = H^0(C, \Omega_C^1)$ identified as a vector subspace  of the vector space $H^0(A, \Omega_A^1)$. Here $C$ denotes the quotient Abelian variety $A/B$.
\end{proposition}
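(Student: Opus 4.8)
The plan is to work in the canonical trivialisation $T^*A\cong A\times V$ with $V=H^0(A,\Omega_A^1)\cong(\Lie A)^\ast$, under which a conormal covector at $z\in Z^{\reg}$ is simply a form $\omega\in V$ with $\omega|_{T_zZ}=0$, where $T_zZ\subseteq T_zA\cong\Lie A$ is translated to the origin. With this identification
\[
\pi(T^*_ZA)=\overline{\bigcup_{z\in Z^{\reg}}\Ann(T_zZ)},
\]
and the fibre of $\pi$ over a given $\omega$ is precisely the zero locus $Z(\omega|_{Z^{\reg}})$. The first input is the dimension count $\dim T^*_ZA=\dim A=\dim V$ together with the fact that $\pi\colon T^*_ZA\to V$ is proper: since $A$ is projective, $A\times V\to V$ is proper and $T^*_ZA$ is closed, so $\pi(T^*_ZA)$ is closed. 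This yields a clean dichotomy: either $\pi$ is dominant, in which case it is generically finite and the general fibre $Z(\omega|_{Z^{\reg}})$ is finite and nonempty, or the image is a proper closed $W\subsetneq V$, in which case the general $\omega$ lies off $W$ (so $\omega|_{Z^{\reg}}$ has no zeros) while over $W$ the fibre dimension is $\dim A-\dim W\ge1$.

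The heart of the argument is a duality criterion for dominance. A nonzero linear functional on $V$ is an element $v\in\Lie A$, and it annihilates the cone $\bigcup_{z}\Ann(T_zZ)$ if and only if $v\in(\Ann(T_zZ))^\perp=T_zZ$ for every $z\in Z^{\reg}$. Thus $\overline{\pi(T^*_ZA)}\neq V$ exactly when there is a nonzero $v\in\Lie A$ lying in $T_zZ$ at every smooth point. I would then convert such a common infinitesimal direction into a genuine subtorus: the translation-invariant vector field $\xi_v$ is everywhere tangent to $Z$, so its (complete) flow $z\mapsto z+\exp_A(tv)$ preserves $Z^{\reg}$, hence $Z$; taking the closure of the one-parameter subgroup $\{\exp_A(tv)\}$ yields a positive-dimensional subtorus $B$ with $Z+B=Z$. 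Conversely, if $Z$ is $B$-invariant then any $0\neq v\in\Lie B$ is everywhere tangent. This establishes
\[
\pi(T^*_ZA)=V \iff Z \text{ is not fibred by a positive-dimensional subtorus},
\]
which, combined with the dichotomy and the translation of fibres into zero loci, gives the equivalence of (1) and (2), once one recalls that $\dim Z\ge1$ is needed for there to be isolated zeros at all.

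For the final identification I would reduce to this criterion. Let $B$ be the maximal subtorus with $Z+B=Z$, put $p\colon A\to C=A/B$ and $\bar Z=p(Z)$, so $Z=p^{-1}(\bar Z)$ and, by maximality of $B$, no positive-dimensional subtorus of $C$ stabilises $\bar Z$. Since $p$ is a submersion, $T_zZ=(dp_z)^{-1}(T_{\bar z}\bar Z)$, so the conormal directions of $Z$ are exactly the pullbacks $p^\ast$ of those of $\bar Z$; under the trivialisation $p^\ast$ is the inclusion $H^0(C,\Omega_C^1)=\Ann(\Lie B)\hookrightarrow V$. Hence $\pi(T^*_ZA)=\pi_C(T^*_{\bar Z}C)$ inside $H^0(C,\Omega_C^1)$, and applying the dominance criterion on $C$ gives $\pi_C(T^*_{\bar Z}C)=H^0(C,\Omega_C^1)$, with genuine equality rather than mere density because $\pi_C$ is proper. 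This is the asserted identity $\pi(T^*_ZA)=H^0(C,\Omega_C^1)$.

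I expect the main obstacle to be the passage from a pointwise common tangent vector to an honest subtorus stabilising $Z$: a single $v\in\Lie A$ need not generate a closed subgroup, so one must argue via the flow and then take the closure of the one-parameter subgroup, using that $Z$ is closed and invariant under a dense subgroup to conclude invariance under the whole $B$. A secondary technical point is to make the flow-invariance argument across $Z^{\sing}$ rigorous, which I would handle by carrying out everything on the dense open $Z^{\reg}$ and passing to closures.
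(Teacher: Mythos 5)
Your argument has a genuine gap at its central step, the ``duality criterion for dominance''. The equivalence you state there --- a nonzero $v\in\Lie A$ annihilates the cone $\bigcup_{z}\Ann(T_zZ)$ if and only if $v\in T_zZ$ for all $z\in Z^{\reg}$ --- is correct, but the conclusion you draw from it, namely that $\overline{\pi(T^*_ZA)}\neq V$ \emph{exactly when} such a $v$ exists, is a non sequitur. Non-dominance of $\pi$ only says that the image is a proper closed conical subvariety of $V$, whereas the existence of a functional vanishing on the whole image says that the image is \emph{contained in a hyperplane}. These are very different conditions: the quadric cone in $\bbC^3$ is a proper, irreducible, conical subvariety lying in no hyperplane, and images of Gauss-type maps can be degenerate in the dimension sense without being linearly degenerate (cones and tangent developables in projective space are the classical examples). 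In fact ``$\pi(T^*_ZA)$ proper $\Rightarrow$ $\pi(T^*_ZA)$ contained in a hyperplane'' is essentially the linearity statement the proposition is asserting, so your shortcut assumes what is to be proved: the implication ``$\pi$ not dominant $\Rightarrow$ common tangent vector $\Rightarrow$ $Z$ fibred by a subtorus'' is precisely the hard direction (it is Weissauer's theorem on subvarieties of abelian varieties with degenerate Gauss map), and linear duality alone cannot deliver it. Since your proof of $(1)\Rightarrow(2)$ and of the final identity $\pi(T^*_ZA)=H^0(C,\Omega_C^1)$ both invoke this criterion, they remain unproven.

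For comparison, this is exactly where the paper does real work: it studies the projectivized conormal map $\varphi\colon\bbP(N)\to\bbP(T_0A)=\bbP^{g-1}$ and shows it is dominant unless $Z$ is fibred. If $S=\varphi(\bbP(N))$ had $\dim S<g-1$, then for general $s$ the positive-dimensional locus $Z_s=\{z\in Z^{\reg}\mid T_zZ\subset H_s\}$ generates an abelian subvariety $B$ which is independent of $s$ (an abelian variety has only countably many abelian subvarieties) and which satisfies $\dim B>\dim Z_s=g-1-\dim S$, because hypothesis (1) prevents $Z_s$ from being a translate of an abelian subvariety; on the other hand $T_0B\subset H_s$ for general $s\in S$ forces $\dim B\le g-1-\dim S$, a contradiction. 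The remaining ingredients of your write-up --- the properness/dimension dichotomy, the passage from an everywhere-tangent $v$ to a subtorus stabilising $Z$ (which is cleaner via the algebraic stabiliser $\{a\in A\mid Z+a=Z\}$, whose Lie algebra contains $v$, than via closures of one-parameter subgroups), and the reduction of the ``in particular'' clause to the quotient $C=A/B$ --- are sound and in fact parallel the paper's proof, but they cannot substitute for the missing dominance argument.
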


\br When $Z$ is smooth, this result is well-known (see e.g.\ \cite[Proposition 6.3.10.]{Laz02} when $A$ is simple; it follows from \cite{PS14} when $A$ is not simple). Hacon and Kov\'acs showed this under the additional assumption that $A$ is simple \cite[Proposition 3.1]{HK05}. In fact, our proof follows from a close inspection of Hacon and Kov\'acs' argument. After this draft was written we also noticed that the result is stated in the preprint
\cite[Theorem 1]{Wei15} with a different argument. 
\er

\begin{proof}[Proof of \cref{van-nonsimple}]\label{proof:van-nonsimple} (2)$\Rightarrow$ (1): Suppose $Z$ is fibred by a Abelian subvariety $B$ and $\dim Z>0$. Let $C \coloneqq A/B$ and $Y = \varphi(Z)$  under the projection $\varphi\colon A\to C$.  
	Considering the isogeny $\tau\colon B\times C\to A$, 
	we obtain $\tau^{-1}(Z)=B\times Y$.  Then the non-trivial 1-forms coming from $B$ do not vanish on the smooth locus of $\tau^{-1}(Z)$, hence general 1-forms on $A$ do not vanish on the smooth locus of $Z$, which contradicts the assumption (2). 
	
	(1)$\Rightarrow$(2): Denote $d=\dim Z$ and $g=\dim A$. If $d=0$ it is trivial, so we assume $d>0$.  Let $N$  be the
	normal bundle of $Z^{\reg}$ in $A$. Associated to the surjection \[T_A|_{Z^{\reg}}\onto N,\]
	there is the following  chain of maps
	\[\varphi\coloneqq (\bbP(N) \to 	Z^{\reg}\times \bbP(T_0{A}) \to \bbP(T_0A)=\bbP^{g-1}).\]
It suffices to show that $\varphi$ is dominant.
	Denote by $p\colon \bbP(N)\to Z^{\reg}$ the projective bundle map. Given a point $s\in S\coloneqq \varphi(\bbP(N)),$  we can associate a hyperplane $H_s\subset T_{0}A$. Then $p$ induces an isomorphism
	\begin{equation}
		p\colon \varphi^{-1}(s)\overset{\sim}{\to} \{z\in Z^{\reg}| T_z Z^{\reg} \subset H_s\}.
		\label{eq:van-nonsimple}
	\end{equation}
	 If $\dim S<g-1$, for general $s\in S$, $Z_s \coloneqq p(\varphi^{-1}(s))$
	has dimension $g - 1 - \dim S$. Let $B$ denote the Abelian subvariety generated by $Z_s$ in $A$. Note that $B$ does not depend on general $s$, since $A$ only contains countably many Abelian subvarieties.  Also, $\dim B > g- 1 - \dim S$. Indeed, (1) implies that $Z_s$ cannot itself be an Abelian variety. By \eqref{eq:van-nonsimple}, $H_s \supset T_0 B$
	for general $s\in S$. Thus 
	$\dim T_0 B \leq g -1 - \dim S$, which gives the contradiction. Hence $\varphi$ is quasi-finite
	dominant morphism.

	For the second part, if $Z$ is not fibred by tori, $\pi(T^*_Z A)=H^0(A,\Omega_A^1)$. When $Z$ is fibred by some tori $B$, let $C$ and $\varphi\colon A\to C$ be as in the beginning of the proof.
	Let $\varphi^*\colon H^0(C,\Omega^1_C) \to H^0(A,\Omega^1_A)$ be the induced injective morphism. 
	We have $$\pi(T^*_Z A)=\varphi^*(H^0(C,\Omega_C^1)) .$$ Hence $\pi(T^*_Z A) $ is linear.  
\end{proof}

\begin{proof}[Proof of \cref{prop:linear}]
	The claim  follows from \cref{van-nonsimple} since $\SS(\sF)$ is a finite union of conormal sheaves along various subvarieties of $A$.
\end{proof}

A consequence of the proposition above is the following

\bc \label{coro:nbic0}
If $X$ admits a finite morphism $f\colon X\to A$ to its image, then $W(X)\cap H^0(A, \Omega_A^1)$ (under suitable identification induced by $f$) is linear. In particular, if the Albanese morphism is finite to its image, then $W(X)$ is linear.
\ec

\begin{proof}
	By \cite[Proposition 3.9 (2)]{Lia18} and \cite[Proposition 3.3]{Sab85}, $(f\times\id)(df^{-1}(0_X))$ in the diagram (\ref{eq:diagram}) is Lagrangian, i.e.\ it is a finite union of conormal sheaves along various subvarieties of $A$. Then the corollary follows from \cref{van-nonsimple}.
\end{proof}

\begin{proof}[Proof of \cref{thm:perverse}]  The proof is divided into 2 steps. 
	
	\medskip 
	
	\textbf{Step 1:} We first  prove the case where $\sF=\sP$ is a perverse sheaf on $A$.
	
	Note that for a short exact sequences of perverse sheaves on $A$
	$$ 0\to \sP' \to \sP \to \sP''\to 0,$$
	we have $\sV(A,\sP)=\sV(A,\sP') \cup \sV(A,\sP'')$ and  $\SS(\sP)=\SS(\sP')\cup\SS(\sP'')$. Since perverse sheaves admit Jordan--Holder type filtration with simple perverse sheaves as quotients, 
	it is enough to deal with the case of simple perverse sheaves. From now on let us assume that $\sP$ is a simple perverse sheaf on $A$. 
	First note that by the propagation property in
	\cref{thm:cjlAbelian} (\ref{thm:cjlAbelian1}), we have 
	$\sV(A, \sP) = \sV^0(A,\sP)$. According to \cref{thm:cjlAbelian} (\ref{thm:cjlAbelian4}), 
	the argument can be split in 
	two cases:
	
	\medskip
	
	\textsl{Case I: $\chi(A, \sP)>0$.} In this case \cref{thm:cjlAbelian} (\ref{thm:cjlAbelian4}) shows  that 
	$\sV^0(A,\sP)=\Char(A)$, hence $\sT(A, \sP)= H^0(A,\Omega_A^1)$.
	On the other hand by
	the Kashiwara index \cref{thm:index}
	we have
	\[\chi(A,\sP) =  \CC(\sP)\cdot T^*_AA.\]
	Note that if $Z\subset A$ is fibred by an Abelian subvariety, $( T^*_ZA\cdot T^*_AA) = 0$. 
	Therefore, $\SS(\sP)$ must contain a subvariety $Z\subset A$ such that
	$Z$ is not fibred by tori. 
	By \cref{van-nonsimple}, we conclude that $\pi(T^*_ZA) = H^0(A,\Omega_A^1)$ and the desired equality follows.
	
	\medskip
	
	\textsl{Case II: $\chi(A, \sP)=0$.} 
	As in \cite[Main Theorem and Lemma 6]{Wei16} we have
	\[\sP\otimes \C_\rho \simeq \varphi^*\sP_C [\dim A-\dim C]\]
	with notations from before.
	Since $\chi(C,\sP_C)>0$, by Kashiwara's index theorem (see also Lemma 3.6 in v3 of this article on arXiv) there exists a component $T^*_ZA\subset \SS(\sP)$ such that $\varphi(Z)\subset C$ is not fibred by tori. 
	Since $\sP$ and $\sP\otimes \C_\rho$ have the same singular support, by \cref{van-nonsimple} 
	we conclude that
	\be
	\pi(\SS(\sP)) = \varphi^* H^0(C, \Omega_C^1).
	\label{eq:ssp}
	\ee
	From Case I above we have 
	\[\sT(C, \sP_C)= H^0(C,\Omega_C^1) 
	.\]
	On the other hand, it follows from \cite[Theorem 5.5]{LMW21}  that
	\begin{equation}
		\sV^0(A, \sP)=\rho^{-1} \cdot \varphi^*(\sV^0(C, \sP_C))
		\label{eq:svp}
	\end{equation}
	where $\varphi^*\colon \Char(C) \to \Char(A)$ is given
	by the induced representation. Putting \eqref{eq:ssp} and \eqref{eq:svp} together, the desired equality follows.

	\medskip
	
	\textbf{Step 2:} In general for any $\sF\in D_c^b(A)$, \cite[Proposition 6.11]{LMW21} shows that 
	$$  \sV(A,\sF) = \bigcup_i \sV^0(A, {^p\sH^i(\sF)}),$$
	where $^p\sH^i(\sF)$ is the $i$-th perverse cohomology of $\sF$. 
	On the other hand, $\SS(\sF)= \bigcup_i \SS(^p\sH^i(\sF))$ by Definition \ref{singular support for complex}. Then the claim follows.
\end{proof}

We are now ready to prove Theorem \ref{thm:etale linearity}.
Let us first introduce a general version for any morphism $f\colon X\to A$ from a smooth projective variety $X$ to an Abelian variety $A$. 
Let

$$\sR_{\dol}(X,f)\coloneqq \bigcup_{\eta\in \Char(A)^u}\{\omega\in H^0(A,\Omega_A^1)| (H^{\bullet}(X,f^*\bbC_{\eta}), \wedge f^*\omega) \text{ not exact}\},$$

\bt \label{thm:linearity}
With the above hypothesis and notations, we have $$ \pi(\SS(\bbR f_*\C_X))= \sT(A,\bbR f_* \C_X) =\sR_{\dol}(X,f),$$
which is a finite union of vector subspaces of $H^0(A,\Omega_A^1).$
\begin{proof}
	The first equality follows from Theorem \ref{thm:perverse}. The second equality follows from \cref{prop:equivalence}. Finally the statement about linearity follows from \cref{prop:linear}.
\end{proof}

\et

\begin{proof}[Proof of Theorem \ref{thm:etale linearity}]
	Given any finite \'etale cover $\tau\colon X'\to X$, by \cref{thm:perverse} we have
	\[\pi(\SS(\bbR (a\circ\tau)_*\C_X))= \sT(A_X,\bbR (a\circ\tau)_* \C_X).\]

	Given any such finite \'etale cover $\tau\colon X'\to X$, by the lemma below there exists a finite \'etale cover $\sigma\colon\tilde{X'}\to X'$ such that 
	\be
	\sT(A_X,\bbR (a\circ\tau)_* \C_X) = \{\omega \in V| 
	(H^{\bullet}(\tilde{X'}, \bbC), \wedge (\sigma\circ\tau)^*\omega) \text{ is not exact}\} 
	\ee

	Then the claim follows by taking unions over all possible finite \'etale covers on both sides.
\end{proof}

\begin{lemma}  Consider $f\colon X\to A$ to be a morphism from a smooth projective variety $X$ to an Abelian variety $A$. Then there exists a finite \'etale Abelian cover $\sigma\colon \tilde{X}\to X$ such that 
	$$ \sT(A_X,\bbR f_* \C_X) = \{\omega \in H^0(A,\Omega^1_A)| 
	(H^{\bullet}(\tilde{X}, \bbC), \wedge (f\circ\sigma)^*\omega) \text{ is not exact}\} 
	$$
\end{lemma}

\begin{proof}
	Recall that $\sV(A,\bbR f_*\C_X)$ has finitely many irreducible components, say $\{S_1,\cdots,S_k\}$ and every irreducible component $S_i$ is a torsion translated subtori of $\Char^0(A)$ \cite{Sch15}. 
	Then there exists a finite Abelian cover   $\sigma\colon \widetilde{X}\to X$ such that $\bigoplus_{i,j} f^*\C_{\rho_i^j}=\bbR\sigma_*\C_{\widetilde{X}}$ for $i=1,\cdots, k$ and finitely many powers $j$ for each $\rho_i$. 
	
	Now, for any $\rho\in \Char^0(X)$, by the projection formula we have 
	\begin{equation} \label{projection formula}
		H^*(\widetilde{X}, \sigma^* \C_\rho)\cong H^*(A, \bbR\sigma_* \C_{\widetilde{X}}\otimes \C_\rho ).
	\end{equation}
	In particular, every component $\rho_i^{-1}\cdot S_i$ contains the constant sheaf $\C_{A}$. Furthermore, for any $j$ we have 
	$\sV(\bbR f_*\C_X\otimes \bbC_{\rho}) = \rho^{1-j}\cdot \sV(\bbR f_*\C_X\otimes \bbC_{\rho^j})$.
	Hence
	\[\sT(A, \bbR f_*\C_X)= \TC_1\sV(A, \bbR(f\circ\sigma)_* \C_{\widetilde{X}})).\]
	Then the claim follows by the tangent-cone equality (\ref{eq:tangentconeeq}).
\end{proof}

\subsection{Proof of \cref{cor general type}} \label{sec:SY22}
 So far all the results in this section are about bounded constructible complexes of sheaves with complex coefficients.
In this subsection, we prove when $A$ is a simple Abelian variety \cref{thm:perverse} holds for any algebraic closed field coefficients. As most of the definitions and tools used in the proof of \cref{thm:perverse} works over $\bbK$ (see \S \ref{sec:perverse}), the main difference here is in how we reduce the argument (see Step 2) from bounded complexes of constructible sheaves to perverse sheaves. In characteristic 0, we resorted to \cite{LMW21B} for this.

\begin{proposition}\label{any field} 
Fix any algebraically closed field $\bbK$. Let $\sF$ be a bounded constructible complex of sheaves with field coefficients $\bbK$ on a complex simple Abelian variety $A$. Then we have
\begin{itemize}
\item either $\sV(A,\sF)=\Char^0(A,\bbK)$ and $\pi(\SS(\sF))=H^0(A,\Omega^1_A)=\sT(A,\sF)$
\item or  $\sV(A,\sF)\neq \Char^0(A,\bbK)$  and $\pi(\SS(\sF))=\{0\} =\sT(A,\sF)$. 
\end{itemize} 
Here $\Char^0(A,\bbK)=\Hom(H_1(A,\Z),\bbK^*)$ is the moduli space of rank one $\bbK$-local system on $A$.
\end{proposition}
\begin{proof}
The proof is divided into 2 steps. 
	
	\medskip 
	
	\textbf{Step 1:} We first  prove the case where $\sF=\sP$ is a perverse sheaf on $A$. Note that $\sV(A,\sP)=\sV^0(A,\sP)$ due to the propagation property, see e.g. \cite[Theorem 4.7]{LMW21}.
Since $A$ is simple,  then there are two possibilities as follows 
\begin{itemize}
\item  Either $\sV^0(A,\sP)=\Char^0(A,\bbK)$ and $\sT(A, \sP)= H^0(A,\Omega_A^1)$.
	
	In this case, by \cref{thm:lociproperties}(3) we know that $\chi(A,\sP)\neq 0$. Hence, by the Kashiwara index \cref{thm:index}
	we have
	\[\chi(A,\sP) =  \CC(\sP)\cdot T^*_AA \neq 0.\] 
	Therefore, $\SS(\sP)$ must contain a subvariety $Z\subset A$ such that
	$Z$ is not fibred by tori. 
	By \cref{van-nonsimple}, we conclude that $\pi(T^*_ZA) = H^0(A,\Omega_A^1)$ and the claim follows.

\item   Or, $\sV^0(A,\sP)\neq \Char^0(A,\bbK)$ and hence $\chi(A,\sP)= 0$. By \cite[Proposition 10.1]{KW15} (the proof for simple abelian varieties works over any characteristic; alternatively see the arxiv version 1 of \cite{LMW21B}),  $\sP$  is a shifted local system. 
Since $\bbK$ is algebraically closed, any $\bbK$-local system on $A$ is a extensions of rank one local systems. Hence
 $\sV(A,\sP)$ are just finitely many points in this case and $\sT(A, \sP)= \{0\}$.
 
On the other hand, since $\sP$ is   a shifted local system we have that $\SS(\sP)= T^*_A A$, hence $\pi(T^*_A A)=\{0\}. $
\end{itemize}

\textbf{Step 2:} 
More generally for any bounded complex of constructible sheaves $\sF$ and   any rank one $\bbK$-local system $\bbL\in \Char^0(A,\bbK)$, we consider  the perverse cohomology spectral sequence
$$E_2^{i, j}=H^{i}(A, \,^p\sH^j(\sF)\otimes_\bbK \bbL)\Rightarrow H^{i+j}(A, \sF\otimes_\bbK \bbL).$$
Observe that $\bigcup_j\sV^0(A, ^p\sH^j(\sF)) \supseteq \sV(A, \sF)$.

Since $^p\sH^j(\sF)$ are perverse sheaves, with Step 1 at our disposal we proceed as follows:
\begin{itemize}
\item Either $\sV(A,\sF) = \Char^0(A,\bbK)$.
In this case, $\sV^0(A,  ^p\sH^j(\sF)) = \Char^0(A,\bbK)$ for some $j$.
 Then $\pi(\SS(^p\sH^j(\sF)))=H^0(A,\Omega^1_A)$, and hence $\pi(\SS(\sF))=H^0(A,\Omega^1_A)$ by definition.

\item Or $\sV(A,\sF) \neq \Char^0(A,\bbK)$.
In this case, we claim that $\sV^0(A,  ^p\sH^j(\sF)) \neq \Char^0(A,\bbK)$ for all $j$.  
Indeed, by choosing $\bbL\in \Char^0(A,\bbK)$ generically, by \cref{thm:lociproperties} (3) we have $ H^{i}(A, \,^p\sH^j(\sF)\otimes_\bbK \bbL)=0$ for any $i\neq 0$. So the spectral sequence degenerates at the second page. It shows that $H^j(A, \sF\otimes_\bbK \bbL)\neq 0 $ for generic $\bbL\in \Char^0(A,\bbK)$, giving $\sV(A,\sF)=\Char^0(A,\bbK)$, a contradiction.

Then by Step 1,  $\pi(\SS(^p\sH^j(\sF)))=\{0\}$, and hence $\pi(\SS(\sF))=\{0\}$ by definition.  
On the other hand, 
  we claim that in this case, in fact, one has $\sV(A,\sF) = \bigcup_j \sV^0(A,^p\sH^j(\sF))$. To see this, for any $\bbL\in \bigcup_j \sV^0(A,^p\sH^j(\sF))$, let $j'$ be the lowest degree such that $\bbL\in \sV^0(A,^p\sH^{j'}(\sF))$.  Set $\dim_\bbC A=d$. Since $ ^p\sH^j(\sF)$ is a local system, it must be that   $^p\sH^{j'}(\sF)\otimes_{\bbK} \bbL[-d]$ contains the constant sheaf as a sub-local system. In particular, $$E_2^{-d,j'}=H^{-d}(A, ^p\sH^{j'}(\sF)\otimes_{\bbK} \bbL)\neq 0.$$ Meanwhile,
 for any $i<-d$,   because of \cite[Theorem 4.7(i)]{LMW21}, or for any $j<j'$, because of the choice of $j'$ and the propagation property in \cref{thm:lociproperties} (1), we have 
 $E_2^{i,j} =0.$ 
Therefore, the above spectral sequence satisfies $$  E_\infty^{-d,j'}=E_2^{-d,j'}\neq 0.$$ Hence $H^{-d+j'}(A,\sF\otimes_\bbK \bbL)\neq 0$ and $\bbL\in \sV(A,\sF)$.
Now $\sF$ being non-zero implies that $\sV(A,\sF) $ are just finitely many points. The claim follows.
\end{itemize}
 \end{proof} 

Now we are ready to prove \cref{cor general type}.
\begin{proof}[Proof of \cref{cor general type}] 
Following the proof of \cite[Proposition 3.1]{SY22},  up to perturbing $p_X^*(d\theta)$ slightly and  multiplying
by a suitable integer, we reduce to the case where $p_X^*(d\theta)\in f^*H^1(A,\Z)$.

As observed in the proof of \cite[Lemma 3.3]{SY22}, we only need to prove that $\bbR f_*\bbK_X$ is locally constant for any algebraically closed field $\bbK$. Then it follows from Qin--Wang's result \cite[Proposition 5.4]{QW18} (see also \cite[Proposition 3.1]{SY22})  that 
$\chi(A, {^p}\sH^j \bbR f_*\bbK_X)=0$ or equivalently $\sV^0(A, {^p}\sH^j \bbR f_*\bbK_X) \neq \Char^0(A,\K) $ for any $j$.
Their proof is based on the observation that the eigenvalues  on the cohomology of the fibers induced by the monodromy action of the circle bundle is a finite set.
Since $A$ is simple, \cref{any field} 
implies that  $\pi(\SS(\bbR f_*\bbK_X))=\{0\}$ and hence ${^p}\sH^j \bbR f_*\bbK_X[-j]=R^jf_*\bbK_X$ are  local systems for all $j$. 
\end{proof}

When $A$ is not necessarily simple, assuming $\bbK = \bbC$ we prove a slightly stronger version.
\begin{corollary} \label{corgentype-nonsimple} 
 With the same assumptions and notations as in \cref{cor general type}, without the simplicity of $A$, one has 
  for every irreducible component $T^*_Z A$ of $\SS(\bbR f_* \bbC_X)$ the sub-variety $Z$ is not of general type.
\end{corollary}

\begin{proof}
Assume that there exists an irreducible component $T^*_Z A$ of $\SS( \bbR f_*\bbC_X)$, where $Z$ is of general type.  
By \cite[Theorem 3.10]{Uen73}, $Z$ is not fibred by any sub-Abelian variety of $A$. Hence $\pi(T^*_ZA) = H^0(A, \Omega_A^1)$ by \cref{van-nonsimple}. In particular, \cref{thm:etale linearity} implies that $\sV(A,\bbR f_* \C_X)=\Char^0(A,\C)$. On the other hand, the same proof as in \cref{cor general type} shows that
one can find rank one $\C$-local system $\C_\rho$ on $A$ such that $$H^*(A, \bbR f_*\C_X \otimes \C_\rho )=0$$
for all degrees. Hence $\sV(A, \bbR f_* \C_X)\neq \Char^0(A,\C)$, which gives a contradiction.

\end{proof}

\section{(Logarithmic) 1-forms with codimension one zeros}

\subsection{Arapura's result about cohomology jump loci}
Let $X$ be a smooth projective variety with a simple normal crossing divisor $D$. Set $U=X-D$.
Note that the space of logarithmic 1-forms $H^0(X, \Omega_X^1(\log D))$ does not depend on the choice of the good compactification of $U$. 
Similar to the projective case,  one can define \cite{BWY16}  $$W^i(X,D)\coloneqq\{ \omega\in H^0(X, \Omega_X^1(\log D)) \mid \codim_X Z(\omega) \leq i \}.$$
where $Z(\omega)$ is the zero locus of $\omega$. By Chevalley's upper-semicontinuity theorem, $W^i(X, D)$ are all algebraic sets.

Define \footnote{Compare this notion to $\sV^i(U,\C_U)$. In this section we will use $\Char(U)$ instead of $\Char^0(U)$. Note that when $H_1(U,\Z)$ has no torsion, $\Sigma^i(U)=\sV^i(U,\C_U).$} \[\Sigma^1(U) \coloneqq \{\rho\in\Char(U)\mid
H^1(U, \bbC_{\rho})\neq 0\}.\]
Arapura's work gives a  geometric interpretation of
the set $\Sigma^1(U)$. We briefly outline it here.  An algebraic morphism $f\colon U \to C$ from $U$ to a smooth curve $C$ is called an \textsl{orbifold map}, if $f$ is surjective, has connected generic fibre, and one of the following condition holds:
\begin{itemize}
\item $\chi(C)<0$
\item $\chi(C)=0 $  and $f$ has at least one multiple fibre.
\end{itemize} 
Roughly speaking, Arapura \cite{Ara97} (also see \cite[Corollary 5.4, Corollary 5.8]{Dim07}) showed that every positive dimensional component of $\Sigma^1(U)$ arises from some orbifold map. More precisely, an orbifold map induces an injection: $f^*\colon H^1(C, \C) \to H^1(U,\C)$.
Arapura's work implies that $$\bigcup_{\rho}   \TC_{\rho} \Sigma^1(U)=  \bigcup_f \im f^*,$$
where the first union is running over representative points from irreducible components of $\Sigma^1(U)$, 
$\TC_{\rho} \Sigma^1(U) \subseteq H^1(U, \bbC)$ denotes the tangent cone at $\rho$, and 
the second union runs over all possible orbifold maps for $U$. 
In particular, there are at most finitely many equivalent orbifold maps for a fixed $U$ \cite[Theorem 1.6]{Ara97}. So the second union is indeed a finite union.
We define $$\sT_\Sigma^1(U)\coloneqq H^0(X, \Omega_{X}^1(\log D))\cap \big(  \bigcup_{\rho}   \TC_{\rho} \Sigma^1(U)\big)= H^0(X, \Omega_{X}^1(\log D))\cap\big(  \bigcup_f \im f^* \big).$$ 
In particular, $\sT_\Sigma^1(U)$ is a finite union of vector subspaces.

\begin{remark}\label{rem:TX}
(1)  Consider the following construction given in \cite[Example 1.11]{DJL17}, which shows that $\sT^1_\Sigma(X)$ indeed captures more information than $\sT^1(X)$ in general. 
	
	Let $C_1$ be a higher genus curve that admits a degree 2 finite morphism to an elliptic curve $E$, and let $C_2$ be an elliptic curve. Consider $\sigma_1$ to be the involution action such that $C_1/\sigma_1 \simeq E$ and $\sigma_2$ induces an isogeny $C_2 \to C_2/\sigma_2$. Then their example is given by $X \coloneqq C_1\times C_2/\sim$, where the $\sim$ is a diagonal action induced by $\sigma_1$ and $\sigma_2$. In this case, one can compute that $$\sT^1(X) =\sR_{\dol}(X) = \{0\}$$
	but one can check that $W(X) = f^*H^0(E, \Omega_E^1)$ for the natural map $f\colon X\to E$ and $$\sT_\Sigma^1(X) = \sT^1(A_X,(a\circ\tau)_*\bbC_{X'}) = H^0(E, \Omega_E^1),$$ where $\tau$ is the \'etale covering $X'\coloneqq C_1\times C_2\to X$.

(2) In general, one should not expect $\sT_\Sigma^1(X) = W^1(X)$. For instance, let $X$ be a complex Abelian surface and $Y$ be the blowup of $X$ along a point. Then we take $Z$ to be the blowup of $Y$ along a point in the exceptional divisor. Then $\sT_\Sigma^1(X)=\sT_\Sigma^1(Z)$, but  $W^1(X)\subsetneq W^1(Z)$.
\end{remark}

\subsection{Projective case}

The observation of Arapura discussed above allows us to turn the piece of $W^1(X)$ that traditionally arised from cohomology jump loci into a set arising out of orbifold maps. In fact we will see in \cref{Thm:Proj-codim1} that holomorphic 1-form in $W^1(X)\setminus \sT_\Sigma^1(X)$ vanishes along some negative divisors.
See \cref{thm:codim1quasiprojective} for its quasi-projective incarnation.

\begin{definition}
Let $X$ be a smooth projective variety of dimension $n$ with a fixed ample line bundle $H$ and an integral divisor $E$ on $X$. We say that $E$ is $H$-negative if $E^2\cdot H^{n-2}<0$, where $$E^2\cdot H^{n-2}=\int_X c_1^2(E)\wedge c_1^{n-2}(H).$$ Similarly, $E$ is called $H$-trivial if $E^2\cdot H^{n-2}=0$. 
\end{definition}  
 
We need the following a more precise version of \cite[Theorem 2]{Spu88}.
\begin{lemma}\label{lem:welldef}
Let $E$ be an integral divisor in $X$. Suppose there exists a holomorphic 1-form $\omega$ such that $E \subseteq Z(\omega)$. Then the following statements are true:

$(1)$ 
When $E$ is $H$-trivial, there exists an orbifold map $f\colon X\to C$ such that $\omega=f^*\eta$ for some $\eta\in H^0(C, \Omega_C^1)$ and $E$ is the only component of the fibre of $f$ containing $E$.

$(2)$ the sign of the intersection number $E^2\cdot H^{n-2}$does not depend on the choice $H$.
\end{lemma}

\begin{proof}
For (1), by \cite[Theorem 2]{Spu88}, we only need to show that $E$ is the unique component in the fibre containing it. To this end, let $E'$ be the union of components not supported on $E$ in the scheme-theoretic fibre containing $E$ and let $aE+E'$ denote the fibre class for some positive integer $a$. 
Since $f$ has connected fibres, $E\cdot E'\cdot H^{n-2}>0$.   
On the other hand since $E'$ is contained in a fibre, we have $(aE+E')\cdot E'\cdot H^{n-2}=0$. Then from $(aE+E')^2\cdot H^{n-2}=0$, we get $E^2\cdot H^{n-2}<0$, which is a contradiction.

To see (2), note that if for any ample class $H$, $E$ is $H$-nonnegative, i.e., $E^2\cdot H^{n-2}\geq0$, by \cite[Theorem 2]{Spu88} it must be $H$-trivial and then by (1), we know that $E$ is the unique component of the fibre of $f$. Therefore for any other ample class $H'$, $E$ must be $H'$-trivial. As a consequence, if $E$ is $H$-negative, it is $H'$-negative for any other ample class $H'$.
\end{proof}

\noindent We denote $$W_{\textnormal{neg}}(X)\coloneqq\{\omega\in H^0(X, \Omega_X^1)\ |\exists \textnormal{ some  negative  integral divisor}\  E\  \textnormal{ such that } E\subset Z(\omega)\}.$$ 
Then we have the following result.

\begin{theorem}\label{Thm:Proj-codim1}
 Let $X$ be a smooth projective variety of dimenison $n$. With the above notations, we have  
$$W^1(X)= \sT_\Sigma^1(X)\cup W_{\textnormal{neg}}(X).
$$
In particular, $W^1(X)$ is  linear. 
\end{theorem}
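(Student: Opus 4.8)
The plan is to establish the two inclusions of the equality separately and then read off linearity from the structure obtained along the way. The whole argument is driven by Spurr's theorem \cite[Theorem 2]{Spu88}, which I have already recorded in the proof of \cref{lem:welldef}: if a holomorphic $1$-form $\omega$ vanishes along an irreducible divisor $E$, then either $E^2\cdot H^{n-2}<0$, or $E^2\cdot H^{n-2}=0$, in which case $E$ is (set-theoretically) a fibre of a surjective morphism $f\colon X\to C$ onto a smooth projective curve and $\omega$ is pulled back from $C$. For the inclusion $W^1(X)\subseteq \sT_\Sigma^1(X)\cup W_{\textnormal{neg}}(X)$ I would take $\omega\in W^1(X)$ and pick an irreducible codimension-one component $E$ of $Z(\omega)$, so $E\subseteq Z(\omega)$. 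If $E^2\cdot H^{n-2}<0$ then $\omega\in W_{\textnormal{neg}}(X)$ by definition. Otherwise Spurr gives $\omega=f^*\eta$ with $\eta\in H^0(C,\Omega_C^1)$; after Stein factorisation I may assume $f$ has connected fibres, and $\omega\neq 0$ forces $g(C)\geq 1$. When $g(C)\geq 2$ one has $\chi(C)<0$, so $f$ is an orbifold map. When $g(C)=1$ the form $\eta$ is nowhere vanishing, so the codimension-one part of $Z(\omega)$ can only come from the ramification of $f$, producing a multiple fibre; hence $f$ is again an orbifold map. In both cases $\omega\in \im f^*\cap H^0(X,\Omega_X^1)\subseteq \sT_\Sigma^1(X)$ via Arapura's identification of the components of $\Sigma^1(X)$ \cite{Ara97}.

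The reverse inclusion $\sT_\Sigma^1(X)\cup W_{\textnormal{neg}}(X)\subseteq W^1(X)$ splits into two easy checks. For $W_{\textnormal{neg}}(X)$ it is immediate, since a negative divisor $E\subseteq Z(\omega)$ already forces $\codim_X Z(\omega)\leq 1$. For $\sT_\Sigma^1(X)$, any $\omega$ in it is $f^*\eta$ for an orbifold map $f\colon X\to C$: if $\chi(C)<0$ then $\eta$ vanishes somewhere and $f^{-1}(Z(\eta))\subseteq Z(\omega)$ is a divisor, while if $\chi(C)=0$ the reduced divisor underlying a multiple fibre lies in $Z(\omega)$ because $f^*$ of a local coordinate vanishes to positive order along it. Either way $\omega\in W^1(X)$.

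Finally, for the linearity statement I would argue as follows. The set $\sT_\Sigma^1(X)$ is a finite union of vector subspaces by Arapura's finiteness of orbifold maps. For $W_{\textnormal{neg}}(X)$, observe that for a fixed divisor $E$ the condition $E\subseteq Z(\omega)$, i.e.\ $\omega|_E\equiv 0$, is linear, so $L_E\coloneqq\{\omega\mid E\subseteq Z(\omega)\}$ is a vector subspace contained in $W^1(X)$, and $W_{\textnormal{neg}}(X)=\bigcup_E L_E$ over negative $E$. By Chevalley upper-semicontinuity $W^1(X)$ is a closed algebraic set, hence has only finitely many irreducible components; since each $L_E$ is irreducible it lies inside one of them. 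The remaining point—and the main obstacle—is to show that each irreducible component $V$ not contained in $\sT_\Sigma^1(X)$ equals a single $L_E$, so that the union is finite and each component is linear. I would obtain this from the rigidity of negative divisors: a divisor with $E^2\cdot H^{n-2}<0$ cannot move in a positive-dimensional family (controlled by semicontinuity of $h^0(\mathcal O_X(E))$ together with the negativity of its class), so the negative divisorial components of $Z(\omega)$ stay constant as $\omega$ varies over the irreducible $V$; thus $V\subseteq L_E$ for a fixed negative $E$, forcing $V=L_E$. The elliptic multiple-fibre bookkeeping and the Stein-factorisation step are routine by comparison.
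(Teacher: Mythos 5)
Your proof of the set-theoretic equality $W^1(X)=\sT_\Sigma^1(X)\cup W_{\textnormal{neg}}(X)$ follows the paper's route: Spurr's theorem, plus the fact recorded in the proof of \cref{lem:welldef} that an $H$-trivial $E$ is set-theoretically an entire (connected) fibre, and then the dichotomy $g(C)\geq 2$ (so $\chi(C)<0$ and $f$ is automatically an orbifold map) versus $g(C)=1$ (where $\eta$ nowhere vanishing and $E$ being the whole set-theoretic fibre force $f^*(f(E))=mE$ with $m\geq 2$, a multiple fibre). You additionally spell out the reverse inclusion $\sT^1_\Sigma(X)\subseteq W^1(X)$, which the paper only asserts; your verification is correct, since for $\chi(C)<0$ every nonzero $\eta$ has zeros ($\deg K_C=2g-2>0$) so $f^{-1}(Z(\eta))\subseteq Z(\omega)$ is divisorial, and for $\chi(C)=0$ the reduced divisor underlying a multiple fibre lies in $Z(f^*\eta)$. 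Up to this point your argument and the paper's are essentially identical.

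The genuine gap is in your linearity step, precisely at the place you flag as ``the main obstacle.'' You want each irreducible component $V\not\subseteq\sT^1_\Sigma(X)$ to lie in a single $L_E$, and you justify the constancy of the negative divisorial component of $Z(\omega)$ along $V$ by rigidity of negative divisors. But rigidity (that a fixed integral $E$ with $E^2\cdot H^{n-2}<0$ does not move: two distinct integral divisors in the same class share no component, so they would meet non-negatively against $H^{n-2}$, contradicting negativity) only rules out deformation of one divisor inside its class. It does not prevent the assignment $\omega\mapsto E_\omega$ from \emph{jumping} among infinitely many pairwise non-equivalent negative divisors as $\omega$ varies in $V$ --- and there can be infinitely many negative divisors on $X$ --- while the components of $Z(\omega)$ do not form a flat family over $V$ (they appear and disappear), so there is no algebraic family to which rigidity or semicontinuity of $h^0(\sO_X(E))$ applies; as written, ``rigid, hence constant over $V$'' is a non sequitur. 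The paper closes exactly this hole differently: \cref{countable} shows, by cutting with a general complete intersection surface in $|mH|$, that there are at most \emph{countably} many $H$-negative divisors; hence the algebraic set $W^1(X)$ is a countable union of linear subspaces ($\sT^1_\Sigma(X)$ together with the $L_E$'s), and since an irreducible complex variety is never a countable union of proper closed subvarieties, each component $V$ is contained in --- hence, being maximal, equals --- one linear piece. Your component-wise strategy becomes correct once you insert this countability-plus-irreducibility argument (or, alternatively, bound $E\cdot H^{n-1}$ for integral $E\subseteq Z(\omega)$ via the inclusion $\sO_X(E)\hookrightarrow\Omega_X^1$, so that the incidence set over $V$ lives in a finite-type Hilbert scheme, and then apply rigidity to a component dominating $V$); without one of these supplements the final step does not go through.
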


\begin{remark} (1) \cref{Thm:Proj-codim1} complements the result of Green-Lazarsfeld \cite{GL87} which ensures the linearity of $\sT_\Sigma^1(X) \subset W^1(X)$. As noted in \cref{rem:TX} (2) this is often a proper subset.

(2) The two pieces $\sT_\Sigma^1(X)$ and $W_{\textnormal{neg}}(X)$ may overlap. For example let $f\colon S\to C$ be a morphism from a smooth  projective surface $S$ to a smooth projective curve $C$ with genus $g(C)\geq2$. Take a 1-form $\omega\in H^0(C, \Omega_C^1)$ which has a zero at $p\in C$. Let $X$  be the blow-up of $S$ along a point in $f^{-1}(p)$. The exceptional curve $E$ has negative self-intersection. Consider the natural morphism $f'\colon X\to C$, then  $(f')^*\omega\in \sT_\Sigma^1(X)\cap W_{\textnormal{neg}}(X)$.
\end{remark}

\begin{lemma} \label{countable} Let $X$ be a  smooth  projective variety of dimension $n$ with an ample divisor $H$. Then there are at most countably many $H$-negative divisors in $X$.
\end{lemma}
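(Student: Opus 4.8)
The plan is to exploit that the N\'eron--Severi group $\mathrm{NS}(X)$ is finitely generated, hence countable, and to show that an $H$-negative integral (i.e.\ prime) divisor is completely determined by its numerical class. This reduces the countability statement to the injectivity of the assignment $E \mapsto [E] \in \mathrm{NS}(X)$ on the set of $H$-negative divisors, which is the entire content of the lemma.

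First I would record the elementary positivity input. If $E_1$ and $E_2$ are two \emph{distinct} prime divisors, then, being irreducible and unequal, they share no common component, so the intersection $E_1 \cap E_2$ is a (possibly empty) effective cycle of pure codimension two, and $E_1 \cdot E_2$ is represented by it. Since $H$ is ample, intersecting this effective cycle with $H^{n-2}$ gives $E_1 \cdot E_2 \cdot H^{n-2} \geq 0$, as the degree of an effective cycle against a power of an ample class is nonnegative.

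The key step is then immediate. Suppose $E_1 \neq E_2$ are both $H$-negative and numerically equivalent, $E_1 \equiv E_2$. Numerical equivalence allows me to replace the second factor by the first, so that $E_1 \cdot E_2 \cdot H^{n-2} = E_1^2 \cdot H^{n-2} < 0$, contradicting the inequality of the previous paragraph. Hence distinct $H$-negative divisors have distinct classes in $\mathrm{NS}(X)$, i.e.\ $E \mapsto [E]$ is injective. Since $\mathrm{NS}(X)$ is a finitely generated abelian group it is countable, and the lemma follows. Note in particular that no deformation/rigidity argument via $\dim |E|$ is needed, which keeps the proof short.

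The only genuine obstacle is a bookkeeping one about the meaning of ``integral divisor'': the argument crucially uses that $E$ is prime (irreducible), so that two distinct such divisors meet in codimension two and contribute a nonnegative intersection number. This is exactly the setting of \cref{lem:welldef}, where $E$ is a single component of a zero locus $Z(\omega)$, so restricting to prime divisors is harmless. The secondary point to verify carefully is the positivity $E_1 \cdot E_2 \cdot H^{n-2} \geq 0$ for the codimension-two effective cycle $E_1 \cap E_2$; this is standard, and once it is in place the contradiction with $E_1^2\cdot H^{n-2}<0$ is formal.
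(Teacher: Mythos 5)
Your proof is correct, but it takes a genuinely different route from the paper. The paper's proof is a two-line reduction to the surface case: it cuts $X$ down by a general complete intersection surface $S$ of hyperplanes in $|mH|$, $m\gg 0$, observes that $E\cap S$ is then a negative curve on $S$, and invokes the classical fact that a smooth projective surface carries at most countably many negative curves. You instead work directly on $X$: using that two distinct prime divisors intersect properly, so that $E_1\cdot E_2\cdot H^{n-2}\geq 0$ (indeed $E_1\cdot E_2$ is the cycle of the effective Cartier divisor $E_1|_{E_2}$), you show an $H$-negative prime divisor is the \emph{unique} prime divisor in its numerical class, and conclude by finite generation of $\mathrm{NS}(X)$. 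In effect you have inlined, and generalised to dimension $n$, the proof of the very surface fact the paper cites as a black box. What your version buys is self-containedness and rigour on a point the paper's sketch glosses over: with a priori uncountably many negative divisors, one must justify that a single surface $S$ works for all of them simultaneously (a ``very general'' choice only accommodates countably many conditions), whereas your argument has no such uniformity issue; what the paper's version buys is brevity, given the known surface statement. Your reading of ``integral divisor'' as prime divisor is the intended one, consistent with its use in \cref{lem:welldef} and \cref{Thm:Proj-codim1}.
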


\begin{proof}
Let $E$ be any $H$-negative divisor. Let $S$ be a general complete intersection surface by the hyperplanes in $|mH|$ for $m\gg 0$. Then $E\cap S$ is a negative curve.
Since there are at most countably many negative curves on $S$,  the claim follows.
\end{proof}

\begin{proof}[Proof  of \cref{Thm:Proj-codim1}]
We assume  $n>1$. For any 1-form $\omega\in W^1(X)$, there is an integral divisor $E\subset X$ such that $E \subseteq Z(\omega)$. By \cite[Theorem 2]{Spu88} we have either $\omega\in W_{\textnormal{neg}}$, or there exists an orbifold map $f\colon X\to C$ with genus  $g(C)>0$  and $\omega=f^*\eta$ for some $\eta\in H^0(C, \Omega_C^1)$. In the latter case by \cref{lem:welldef} we know that $E$ is the only component of a fibre. Since $E\subseteq Z(f^*\eta)$ and $E$ is the whole fibre, either $f$ has a multiple fibre and  $g(C)=1$, or $g(C)>1$ and $\eta(f(E))=0$. Hence  $\omega\in \sT_\Sigma^1(X)$. Notice that $\sT_\Sigma^1(X)\subseteq W^1(X)$ and hence  the first part of the theorem follows.

For the second part, note that
$W^1(X)$ is an algebraic set. 
Since $\sT_\Sigma^1(X)$ is  linear and $W_{\textnormal{neg}}$ is a union of at most countably many linear subspaces in $H^0(X, \Omega_X^1)$ by \cref{countable} and \cref{lem:welldef}, $W^1(X)$ is also linear.
 \end{proof}

 \subsection{Quasi-projective case}

Let $X$ be a smooth projective variety with a simple normal crossing divisor $D=\sum_{j=1}^r D_j$. Set $U=X-D$.
Similar to the projective case, we denote
$$W_{\textnormal{neg}}(X, D)\coloneqq\{\omega\in H^0(X, \Omega_X^1(\log D))\ |\ E\subseteq Z(\omega)   \ \textnormal{for some negative integral divisor}\  E\}.$$ 
 
\begin{theorem} \label{thm:codim1quasiprojective} 
With the above notations, we have
$$
W^1(X,D)= \sT_\Sigma^1(U) \cup W_{\textnormal{neg}}(X, D).
$$
In particular $W^1(X,D)$ is linear in $H^0(X, \Omega_X^1(\log D))$. 
\end{theorem}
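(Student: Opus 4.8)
The plan is to mirror the proof of \cref{Thm:Proj-codim1} step by step, replacing the single analytic input---Spurr's theorem \cite[Theorem 2]{Spu88}---by its logarithmic generalisation \cref{thm:spurr}. The version I need should state that if a logarithmic $1$-form $\omega\in H^0(X,\Omega_X^1(\log D))$ vanishes along an integral divisor $E\subseteq Z(\omega)$, then either $E$ is $H$-negative, or $E$ is $H$-trivial and lies in a fibre of an orbifold map $f\colon U\to C$ with $\omega=f^*\eta$ for a logarithmic $1$-form $\eta$ on the smooth compactification $\bar C$ (boundary $\Delta\coloneqq\bar C\setminus C$). Establishing this dichotomy is the heart of the matter and the step I expect to be the main obstacle, since Spurr's surface-theoretic argument must be lifted to the quasi-projective setting and reconciled with the logarithmic poles along $D$; once it is in hand the remainder is essentially formal.

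Granting \cref{thm:spurr}, I would prove the forward inclusion $W^1(X,D)\subseteq\sT_\Sigma^1(U)\cup W_{\textnormal{neg}}(X,D)$ exactly as in the projective case. Given $\omega\in W^1(X,D)$ with $E\subseteq Z(\omega)$, the $H$-negative alternative puts $\omega\in W_{\textnormal{neg}}(X,D)$. In the orbifold alternative $\omega=f^*\eta$, with $E$ being $H$-trivial and contained in a fibre, the logarithmic analogue of the argument in \cref{lem:welldef} forces $E$ to be the whole fibre of $f$ over $f(E)$. As this entire fibre lies in $Z(f^*\eta)$, either $\Omega^1_{\bar C}(\log\Delta)$ has positive degree and $\eta$ vanishes at $f(E)$, or its degree is zero and $f$ has a multiple fibre along $E$; these are precisely the orbifold conditions $\chi(C)<0$, respectively $\chi(C)=0$ with a multiple fibre. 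Either way $\omega\in\im f^*$, and Arapura's description $\bigcup_\rho\TC_\rho\Sigma^1(U)=\bigcup_f\im f^*$ places $\omega$ in $\sT_\Sigma^1(U)$.

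For the reverse containment, $W_{\textnormal{neg}}(X,D)\subseteq W^1(X,D)$ is immediate from the definitions, whereas $\sT_\Sigma^1(U)\subseteq W^1(X,D)$ requires that every orbifold pullback $\omega=f^*\eta$ vanish in codimension one. When $\chi(C)<0$ the bundle $\Omega^1_{\bar C}(\log\Delta)$ has positive degree, so $\eta$ has a zero and $\omega$ vanishes along the corresponding fibre; when $\chi(C)=0$ the orbifold hypothesis supplies a multiple fibre $mF_0$ over an interior point, and the local model $f^*(dw)=m\,z^{m-1}\,dz$ exhibits a zero of order $m-1$ along $F_0$. This is exactly where the orbifold conditions are indispensable, and it explains why nowhere-vanishing logarithmic forms such as $dz/z$ on $\C^*$, which arise only from maps violating the orbifold condition, are correctly kept out of $\sT_\Sigma^1(U)$.

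Linearity then follows by the same bookkeeping as in \cref{Thm:Proj-codim1}. First, $\sT_\Sigma^1(U)$ is a finite union of linear subspaces because, by \cite[Theorem 1.6]{Ara97}, there are only finitely many orbifold maps up to equivalence. Second, the logarithmic analogues of \cref{lem:welldef} and \cref{countable}---that the sign of $E^2\cdot H^{n-2}$ is independent of the polarisation $H$ and that there are at most countably many $H$-negative divisors---show that $W_{\textnormal{neg}}(X,D)$ is a countable union of linear subspaces. Finally, $W^1(X,D)$ is an algebraic set by Chevalley's upper-semicontinuity theorem, and an algebraic set which is a countable union of linear subspaces has only finitely many irreducible components, hence is a finite union of linear subspaces. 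The only genuinely new ingredient is thus \cref{thm:spurr}, as flagged above.
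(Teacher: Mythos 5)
Your strategy coincides with the paper's: rerun the proof of \cref{Thm:Proj-codim1} with Spurr's theorem replaced by its logarithmic generalisation \cref{thm:spurr}, then get linearity from finiteness of orbifold maps, countability of negative divisors, and algebraicity of $W^1(X,D)$; that bookkeeping in your write-up matches the paper. The genuine gap is in the forward inclusion: the ``version I need'' that you formulate drops the hypothesis $E\not\subseteq D$ which the actual \cref{thm:spurr} carries, and this hypothesis is not cosmetic. The proof of \cref{thm:spurr} reduces to the case $E\cap D=\emptyset$ and then normalises $E$ and maps it into $U$ to run a quasi-Albanese argument; if $E$ is a component of $D$, none of this applies, and your dichotomy is not even well-posed, since $E\cap U=\emptyset$ means $E$ cannot ``lie in a fibre'' of an orbifold map defined on $U$. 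So a form $\omega\in W^1(X,D)$ whose only codimension-one zeros are components of $D$ --- a case your proof must cover, because $Z(\omega)$ is computed on $X$ --- falls outside your case analysis; you flag the dichotomy as the main obstacle but defer its proof, so this case is simply unproved in your proposal.

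The paper closes exactly this case with a short but essential reduction you would need to add: if $E=D_1$ is a component of $D$ with $D_1\subseteq Z(\omega)$, the injection $H^0(X,\Omega_X^1(\log D)(-D_1))\into H^0(X,\Omega_X^1(\log (D-D_1)))$ of \cite[2.3 Properties]{EV92} shows that vanishing along $D_1$ kills the pole along $D_1$, i.e.\ $\omega\in H^0(X,\Omega_X^1(\log(D-D_1)))$. Now \cref{thm:spurr} applies to the pair $(X,D-D_1)$, since $E\not\subseteq D-D_1$, and yields an orbifold map $f_1\colon U_1\to C_1$ on $U_1=X\setminus(D-D_1)$ with $\omega\in f_1^*H^0(\overline{C}_1,\Omega^1_{\overline{C}_1}(\log B_1))$; restricting to $U$ gives an orbifold map $f=f_1|_U\colon U\to C$ with $f_1^*H^0(\overline{C}_1,\Omega^1_{\overline{C}_1}(\log B_1))\subseteq f^*H^0(\overline{C},\Omega^1_{\overline{C}}(\log B))$, whence $\omega\in\sT_\Sigma^1(U)$. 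With this step inserted, your argument becomes the paper's proof. (A much smaller point: in your reverse inclusion $\sT_\Sigma^1(U)\subseteq W^1(X,D)$, when $\chi(C)<0$ the zero of $\eta$ as a section of $\Omega^1_{\overline{C}}(\log B)$ may sit on the boundary $B$, so ``the corresponding fibre'' is not inside $U$, and one still has to check that the extension of $f^*\eta$ across $D$ vanishes along a divisor of $X$; neither you nor the paper spells this out.)
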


The proof of Theorem \ref{thm:codim1quasiprojective}  follows that of Theorem \ref{Thm:Proj-codim1} closely with Theorem \ref{thm:spurr} in appendix, which is a generalisation of Spurr \cite[Theorem 2]{Spu88} for pairs.

\br\label{rem:bwy}  In \cite{BWY16}, Budur, Wang and Yoon identified a linear piece of $W^1(X,D)$; namely  
$$ \left(\textbf{R}^1 \cup \textbf{R}_{2n-1}\right) \cap H^0(X,\Omega^1_X(\log D)) \subseteq W^1(X,D).$$
Here we use the same notations as in their paper. 
Note that  $\textbf{R}^1 \cap H^0(X,\Omega^1_X(\log D))$coincides with $\sT^1_\Sigma(U)$. 
But it is not clear to us how $\textbf{R}_{2n-1} \cap H^0(X,\Omega^1_X(\log D))$ is connected to $W_{\textnormal{neg}}(X, D)$.

 Dimca in \cite{Dim10} define the first logarithmic resonance variety 
\[\mathcal{LR}_1(U) \coloneqq \{\omega\in H^0(X,\Omega_X^1(\log D)) \mid H^1(H^0(X, \Omega_X^{\bullet}(\log D)), \wedge\omega)\neq 0\}\]  as the same notation in his paper. In particular,  \cite[Proposition 4.5]{Dim10} implies that $\mathcal{LR}_1(U)=\bigcup_f \im f^*$, where the union runs over all possible orbifold maps $f:U \to C$ with $\chi(C)<0$ and $C$ not being a once-punctured elliptic curve. Hence $\mathcal{LR}_1(U)\subseteq \sT^1_\Sigma(U)$.
\er

\appendix

\section{Logarithmic Generalization of A Theorem of Spurr}

We show the following generalization of Spurr \cite[Theorem 2]{Spu88} for pairs. 

\begin{theorem} \label{thm:spurr}
Let $(X, D)$ be a pair with $X$ a smooth projective variety of dimension $n$ and $D$ a simple normal crossing divisor on $X$. Let $H$ be an ample divisor on $X$. If $(X, D)$ carries a nonzero logarithmic 1-form $\omega\in H^0(X, \Omega_X^1(\log D))$  such that there exists an integral divisor $E$ with $E^2\cdot H^{n-2}\geq0$, $E\subseteq Z(\omega)$, and $E\not\subseteq D$, then there is a surjective morphism $f\colon X-D\to C$ to a smooth quasi-projective curve $C=\bar{C}-B$ (where $\bar{C}$ is a smooth completion of $C$ and $B$ can be empty) with 

(1)  $\chi(C)\leq 0$ and $\omega=f^*\eta$ for some $\eta\in H^0(\bar{C}, \Omega_{\bar{C}}^1(\log B))$.

(2) $f$ has  connected generic fibres.

(3) $E^2\cdot H^{n-2}=0$.

(4) If $\chi(C)=0$, then $f$ has at least one multiple fibre.
\end{theorem}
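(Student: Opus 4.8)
The plan is to reduce Theorem \ref{thm:spurr} to the projective statement of Spurr \cite[Theorem 2]{Spu88} by passing to an appropriate blow-up and a suitable projective 1-form, then track the logarithmic data. First I would exploit that a logarithmic 1-form $\omega \in H^0(X,\Omega^1_X(\log D))$ decomposes, after possibly blowing up so that $D = \sum_j D_j$ remains simple normal crossing, into a holomorphic part and residue contributions $\sum_j \lambda_j \, d\log s_{D_j}$ along the components of $D$. The hypothesis $E \not\subseteq D$ with $E \subseteq Z(\omega)$ is crucial: it ensures that the vanishing of $\omega$ along $E$ is forced on the holomorphic piece of $\omega$ generically along $E$, since the residue forms have poles precisely along $D$ and do not vanish on a divisor meeting $U = X \setminus D$ transversally. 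I would make this precise by restricting $\omega$ to a general complete intersection surface $S \in |mH|$ for $m \gg 0$, so that $S$ meets $E$ in a curve $E \cap S$ with $(E\cap S)^2 = E^2 \cdot H^{n-2} \geq 0$ and $D \cap S$ becomes a reduced divisor on $S$ transverse to $E \cap S$.

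Next, on the surface $S$ the restricted form $\omega|_S \in H^0(S, \Omega^1_S(\log (D\cap S)))$ vanishes along the non-negative curve $E\cap S$. Here I would invoke Spurr's surface theorem in its logarithmic form, or equivalently reduce to the compact case by the standard device: if $E\cap S$ is not $H$-negative, one shows the 1-form cannot have isolated generic behaviour and must be pulled back from a curve. Concretely, the condition $(E\cap S)^2 \geq 0$ together with $E\cap S \subseteq Z(\omega|_S)$ forces, via the Hodge-index theorem and the fact that a holomorphic (or log) 1-form vanishing on a divisor of non-negative self-intersection constrains the image of the associated Albanese-type map, the existence of a fibration $\bar f \colon S \to \bar C$ onto a smooth curve with $E\cap S$ contained set-theoretically in a fibre. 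The connectedness of fibres is arranged by Stein factorisation, and the genus/Euler-characteristic bound $\chi(C)\leq 0$ comes from the existence of the nonzero pulled-back 1-form $\eta$ together with $\omega = \bar f^* \eta$.

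The third and fourth conclusions then follow by the same numerical bookkeeping as in the proof of \cref{lem:welldef}: once $E\cap S$ is an entire fibre of $\bar f$, writing a general fibre $F \equiv a(E\cap S) + E'$ and using $F^2 = F\cdot E' = 0$ forces $(E\cap S)^2 = 0$, hence $E^2\cdot H^{n-2} = 0$, giving (3); and if $\chi(C) = 0$, i.e.\ $C$ is elliptic or a once-punctured elliptic curve, then a non-constant map without multiple fibres would make $E\cap S$ numerically a positive multiple of a torsion-trivial class, contradicting its being a single fibre class with the prescribed intersection, so $\bar f$ must carry a multiple fibre, giving (4). The final task is to descend the fibration $\bar f \colon S \to \bar C$ constructed on the general surface section back to a morphism $f \colon U \to C$ on all of $U$; this is achieved because the fibration class is pulled back from $X$ (the curve $C$ and the map are determined by the sub-Hodge structure generated by $\eta$, independent of the general $S$), so by taking $S$ in a base-point-free linear system and letting it vary one glues the fibrations into a global orbifold map $f\colon X \setminus D \to C$ with $\omega = f^*\eta$.

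The main obstacle I anticipate is the descent and gluing step, namely upgrading the fibration found on a single general complete-intersection surface $S$ to a genuine morphism defined on all of $U$, and simultaneously controlling the logarithmic poles so that $\eta$ lives in $H^0(\bar C, \Omega^1_{\bar C}(\log B))$ with the correct boundary $B$. The subtlety is that Spurr's argument is intrinsically two-dimensional, so one must argue that the curve $C$ and the Stein-factored map are independent of the choice of general $S$ (using that there are only countably many such fibrations and finitely many orbifold maps, as recorded in Arapura's theorem and \cite[Theorem 1.6]{Ara97}), and that the residues of $\omega$ along $D$ match the residues of a log 1-form pulled back from $(\bar C, B)$. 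Handling the residue matching carefully, and ensuring $E\not\subseteq D$ is used exactly where needed to prevent the zero locus from being absorbed into the boundary, is where the proof must be most precise.
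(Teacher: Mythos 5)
Your proposal takes a genuinely different route from the paper, but it has two gaps that I do not think can be repaired along the lines you sketch.

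First, the pivot of your argument is to restrict to a general complete intersection surface $S\in|mH|$ and then ``invoke Spurr's surface theorem in its logarithmic form.'' No such logarithmic theorem is available: Spurr's result \cite[Theorem 2]{Spu88} concerns holomorphic $1$-forms on compact manifolds, and the logarithmic statement on $S$ (for $\omega|_S\in H^0(S,\Omega^1_S(\log (D\cap S)))$ vanishing on the curve $E\cap S$) is precisely the two-dimensional case of the theorem you are trying to prove, so this step is circular; the phrase ``one shows the 1-form cannot have isolated generic behaviour and must be pulled back from a curve'' is exactly the content that must be supplied. The paper supplies it not on a surface but globally on $U$: it first reduces to the case $E\cap D=\emptyset$, by a cohomological argument with the residue sequence showing that $\omega$ in fact has \emph{no pole} along any component of $D$ meeting $E$ (note this is a statement about poles disappearing, not your proposed global decomposition of $\omega$ into a holomorphic part plus $\sum_j\lambda_j\,d\log s_{D_j}$, which does not make sense globally), and then uses the quasi-Albanese $a_U\colon U\to A_U$ together with the normalization $N\to E$: since $\omega$ pulls back to zero on $N$, the image $\psi(A_N)$ lies in the hyperplane $\{z_1=0\}/\Lambda$ dual to $\omega$, and the composite $\beta\colon U\to T=A_U/\psi(A_N)$ contracts $E$. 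Surfaces enter the paper's proof only to show $\dim\beta(U)=1$: if $\beta$ were generically finite on a general complete intersection surface, the contracted curve $E$ (disjoint from $D$) would have $E^2<0$ there, contradicting $E^2\cdot H^{n-2}\geq 0$.

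Second, even granting a surface statement, your descent step --- gluing fibrations $\bar f\colon S\to\bar C$ over varying general surfaces $S$ into a morphism defined on all of $U$ --- is acknowledged by you as the main obstacle but never resolved; neither the countability of negative divisors nor the finiteness of orbifold maps gives an extension of a map from a hyperplane section to the ambient variety, and this is exactly the kind of statement that needs a proof, not a plausibility argument. The paper's route avoids the issue entirely because the map $\beta$ is defined on all of $U$ from the outset; the fibration $f$ is then obtained by Stein factorisation of (a projectivisation of) $\beta$, and $\omega=f^*\eta$ follows because $T$ is isogenous to the Albanese $A_C$ of the image curve, so the form $dz_1$ on $T$ descends through $f$. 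Your numerical bookkeeping for conclusions (3) and (4) is in the right spirit and essentially matches the paper's, but it can only be run once the fibration exists. If you wanted to salvage your plan, you would have to first prove the logarithmic surface case by an Albanese-quotient argument anyway --- at which point you may as well run that argument in all dimensions, which is what the paper does.
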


To prepare for the proof, recall the following
construction of Albanese varieties, which can be found in \cite{Iit76} (see also \cite{Fuj15} for a survey).

Let $X$ be a smooth projective variety of dimension with a simple normal crossing divisor $D$. Pick a basis $\{\theta_1,\ldots\theta_q\}$ for $H^0(X, \Omega_X^1)$ and $\{\omega_1, \ldots, \omega_r\}\in H^0(X, \Omega_X^1(\log D))$ such that $\{\theta_1, \ldots, \theta_q, \omega_1, \ldots, \omega_r\}$ is a basis of $H^0(X, \Omega_X^1(\log D))$. Pick a basis $\{\gamma_1, \ldots, \gamma_{2q}\}$ for the free part of $H_1(X, \mathbb{Z})$ and a basis $\{\delta_1, \ldots, \delta_r\}$ for the free part of $ \ker\{ H_1(U, \mathbb{Z})\to H_1(X, \mathbb{Z})\}.$ Then we have the following periods as a semi-lattice for $H^0(X,\Omega_X^1(\log D))^\vee$ 
\[\Lambda=\sum_{i=1}^{2q}\mathbb{Z}\left(\int_{\gamma_i}\theta_1, \ldots, \int_{\gamma_i}\omega_r\right)+\sum_{j=1}^r\mathbb{Z}\left(\int_{\delta_j}\theta_1, \ldots, \int_{\delta_j}\omega_r\right).\]
The Albanese variety is then defined as the semi-Abelian variety $A_U =\displaystyle \frac{H^0(X,\Omega_X^1(\log D))^{\vee}}{\Lambda}$ and the Albanese map $a_U\colon U \to A_U$ is given by 
\[a_U(x)=\left[\sum_{i=1}^q\left(\int_{p}^x\theta_i\right)\theta^*_i+\sum_{j=1}^r\left(\int_{p}^x\omega_j\right)\omega_j^*\right]\slash\Lambda,\]
 where $p\in U$ is a chosen base-point and $\theta_i^*, \omega_j^*$ are the dual bases in $H^0(X, \Omega_X^1(\log D))^{\vee}$.

\begin{proof}[Proof of \cref{thm:spurr}]
First we claim that if $E\cap D_i \neq \emptyset$ for some component
$D_i$ of $D$, then $\omega$ has no pole along $D_i$. In other words, $\omega\in H^0(X, \Omega_X^1(\log D-D_i))$. To see
the claim consider the following diagram  
\begin{equation}\label{holo-log-form-compare}
\begin{tikzpicture}[baseline= (a).base]
\node[scale=.93] (a) at (0,0){
\begin{tikzcd}
 H^0(X, \Omega_X^1(\log D-D_i)\otimes\mathcal{O}_X(-E)) \ar[r]\ar[d]& H^0(X, \Omega_X^1(\log D-D_i))\ar[r]\ar[d] & H^0(\Omega_X^1(\log D-D_i)|_{E}) \ar[d] \\
   H^0(X, \Omega_X^1(\log D)\otimes\mathcal{O}_X(-E))\ar[r]& H^0(X, \Omega_X^1(\log D)) \ar[r]& H^0(X, \Omega_X^1(\log D)|_{E})
\end{tikzcd}
};
\end{tikzpicture}
\end{equation}
The left vertical arrow is an isomorphism. Indeed, it is injective and the cokernel is contained in $H^0(D_i, \sO_{D_i}(-E))$ \cite[2.3 properties]{EV92} which is zero since $E|_{D_i}$ is an effective divisor. Therefore any $\omega\in H^0(X, \Omega_X^1(\log D))$ such that $E\subseteq Z(\omega)$ must come from $H^0(X, \Omega_X^1(\log D-D_i))$. Hence the claim. 

Now it suffices to deal with the case when $D\cap E = \emptyset$. Indeed, let $D = D' + D''$ such that $E$ intersects each component of $D'$ and $E\cap D'' = \emptyset$, by the reduction step we can construct an orbifold map from $f\colon X\setminus D'' \to C$ satisfying the desired properties. Then the restriction $f|_{U}\colon U\to C$ is also an orbifold map and satisfies the same properties. In what follows we assume $D\cap E =\emptyset$.

We may assume that $\omega$ is not everywhere holomorphic, otherwise we are done by \cite[Theorem 1]{Spu88}.
 Let $\phi\colon N\to E$ be the normalisation map and $\varphi\colon N\to U$ be the composition map (this makes sense since $E\cap D=\emptyset$). Consider the  following commutative diagram 
$$
\xymatrix{
 N\ar[r]^{\varphi}\ar[d]^{a_N}& U \ar[d]^{a_U} \\
  A_N\ar[r]^{\psi} & A_U,}
$$ where $a_N$ and $a_U$ are Albanese maps and  the base points are chosen in an appropriate way such that $\psi$ is a group homomorphism.

Now we consider the transpose of the pullback map 
\[\varphi^{\vee}\colon H^0(N, \Omega_N^1)^{\vee}\to H^0(X, \Omega_X^1(\log D))^{\vee},\] which induces the morphism  $\psi$. Without any loss of generality we assume $\omega_1=\omega$ with the notations introduced shortly before the proof. 
Hence by  hypothesis $\varphi^{*}(\omega_1)=0$. Let $z_1$ be the coefficient coordinate of $\omega_1^*\in H^0(X, \Omega_X^1(\log D))^{\vee}$. We get
 $\psi(A_N)$ is contained in $\{z_1=0\}/\Lambda$.    
We define $\beta\colon U \to T\coloneqq A_U/\psi(A_N)$ as the  composition of $a_U$ and the quotient $A_U\to T$. 
Since $\alpha_U(E)\subset \psi(A_N)$, $E$ is contracted by $\beta$.

Notice that $\beta$ is not the constant map. 
We claim that $\dim\beta(U)=1$.  Otherwise, replacing $U$ by an intersection of general hyperplane sections coming from the very ample linear system $|mH|$ for some $m\gg 0$ we may assume $\dim U = \dim \beta(U) = 2.$ In this case $\beta\colon U\to \beta(U)$ is a generically finite surjective morphism. Projectivising  
and resolving indeterminacy we get a generically finite morphism $\overline{\beta}\colon\overline{U}\to\overline{\beta(U)}$ where $\overline{U}$ is smooth and projective. 
Note that  $E\cap D=\emptyset$. Hence $E^2<0$ in $\overline{U}$ (See e.g., \cite[Theorem 10.1]{KK13}),  
which contradicts the assumption $E^2\geq 0$. 

Taking the Stein factorisation of $\bar{\beta}$, we get the following commutative diagram:
$$\xymatrix{
\overline{U} \ar[rd]_{\overline{\beta}} \ar[r]^f & \overline{C} \ar[d]  \\
& \overline{\beta(U)},
}$$
 where $\overline{C}$ is the smooth curve defined by the Stein factorization and $C\coloneqq f(U)$. Then we have the following commutative diagram:
 $$ \xymatrix{
U \ar[r]^{f|_U} \ar[d] & C \ar[r] \ar[d] & \beta(U) \ar@{^{(}->}[d] \\ 
A_U \ar[r]^{\psi_f} & A_C \ar[r] & T }
 $$
Note that the holomorphic 1-form $dz_1$ on $T$ pulls back to logarithmic 1-form $\omega$. Therefore to see (1), it suffices to show that $T$ is isogenous to $A_C$. 
First all the horizontal maps in the diagram are surjective. In fact we only need to show $\psi_f$ is surjective. 
Since $f|_U\colon U\to C$ is surjective and has connected generic fibres,  the induced map on the first homology groups $H_1(U,\Z) \to H_1(C,\Z)$ is surjective and the surjectivity of $\psi_f$ follows. 
Since $f$ comes from the Stein factorisation and  $\beta$ contracts $E$,  so does $f$. 
By choosing appropriate base points, we get that $\psi(A_N)$
 is contained in the kernel of $\psi_f$. Hence $A_C$ is isogenous to $T$. To see (3), notice that $E$ is contained in a fibre of $f$ and $E^2\cdot H^{n-2}\geq 0$. Hence $E^2\cdot H^{n-2}=0$.
 Finally for (4), when $\chi(C)=0$, notice that for any non-zero $\eta \in H^0(\overline{C}, \Omega^1_{\overline{C}}(\log B))$, it has no zeros. Then $E$ has to be a multiple fibre. 
\end{proof}

\begin{proof}[Proof of \cref{thm:codim1quasiprojective}]
We assume  $n>1$. For any 1-form $\omega\in W^1(X,D)$, pick an integral divisor $E\subset X$ such that $E \subseteq Z(\omega)$.  
We may assume that $\omega \notin W_{\textnormal{neg}}(X,D)$ so that $ E^2\cdot H^{n-2}\geq 0$. If $E $ is not a  component of $D $, then $\omega \in \sT^1_\Sigma(U)$ by Theorem \ref{thm:spurr}.    Otherwise,  say $E=D_1$ a component of $D$. Then we have an injective map \cite[2.3 Properties (c)]{EV92}
\[
 H^0(X, \Omega_X^1(\log D)(-D_1))\into H^0(X, \Omega_X^1(\log D-D_1)).
\]
 In particular,  $ \omega \in H^0(X, \Omega^1_X (\log D-D_1))$. Set $U_1=X-\bigcup_{j\neq 1} D_j$.  By Theorem \ref{thm:spurr}, we have an orbifold map $f_1\colon U_1 \to C_1$ such that $\omega\in f_1^*  H^0(\overline{C}_1, \Omega^1_{\overline{C}_1} (\log B_1 )) $, where $B_1= \overline{C}_1-C_1 $.
Note that $f\coloneqq f_1 \vert_{U}\colon U \to C$ is also an orbifold map, where $C$  is the image of $U$. 
Furthermore we know that $f_1^* H^0(\overline{C}_1, \Omega^1_{\overline{C}_1} (\log B_1) )$ is contained in $f^* H^0(\overline{C}, \Omega^1_{\overline{C}} (\log B))$, where $ \overline{C}=\overline{C}_1$ and $B=  \overline{C}-C$. 
It implies that $\omega \in f^* H^0(\overline{C}, \Omega^1_{\overline{C}} (\log B))$, i.e. $\omega \in \sT^1_\Sigma (U)$. The first part follows.

To see the linearity, we notice that there are at most countably many negative divisors. Indeed, similar to the projective case the sign of the intersection of $E^2\cdot H^{n-2}$ does not depend on the choice of $H$ when $E\subset Z(\omega)$ for some $\omega\in H^0(X,\Omega_X^1(\log D))$. This can be seen using the same argument as in the proof of \cref{lem:welldef}.   
Then the proof follows the arguments in Theorem \ref{Thm:Proj-codim1} verbatim.
\end{proof}

\subsection*{Acknowledgements}
We are very grateful to the referee for carefully reading our paper and providing us with insightful input that greatly improved the exposition quality. The referee inspired us to generalize our main result to arbitrary characteristic in case of simple abelian varieties and relate it to Schreieder and Yang's work \cite{SY22}; we are thankful for their encouragement.
We also thank Nero Budur, Daniel Huybrechts, Sandor Kov\'acs, Laurentiu Maxim, Mihnea Popa, Claude Sabbah, Christian Schnell, Stefan Schreieder, Vivek Shende, Botong Wang, Lei Wu and Ruijie Yang for several insightful discussions. 
YD was supported by the Hausdorff Center of Mathematics, Bonn under Germany's Excellence Strategy (DFG) - EXC-2047/1 - 390685813 during the preparation of this manuscript. 
FH is supported by grant 1280421N from the Research Foundation Flanders (FWO). Also, part of the work was done when the second named author was supported by grant G097819N of Nero Budur from the Research Foundation Flanders (FWO).
YL is partially supported  by National Key Research and Development Project SQ2020YFA070080, the starting grant from University of Science and Technology of China, NSFC grant No. 12001511, the Project of Stable Support for Youth Team in Basic Research Field, CAS (YSBR-001),  the project ``Analysis and Geometry on Bundles" of Ministry of Science and Technology of the People's Republic of China and  Fundamental Research Funds for the Central Universities.

\end{document}